\documentclass[final,onefignum,onetabnum]{siamart190516}



\newtheorem{example}{Example}
\usepackage{lipsum}
\usepackage{amsfonts}
\usepackage{graphicx}
\usepackage{epstopdf}
\usepackage{algorithmic}
\usepackage{arydshln}

\ifpdf
  \DeclareGraphicsExtensions{.eps,.pdf,.png,.jpg}
\else
  \DeclareGraphicsExtensions{.eps}
\fi


\newcommand{\define}{\stackrel{\text{\tiny def}}{=}}

\newsiamremark{remark}{Remark}
\newsiamremark{hypothesis}{Hypothesis}
\crefname{hypothesis}{Hypothesis}{Hypotheses}
\newsiamthm{claim}{Claim}

\headers{SHERMAN-MORRISON-WOODBURY IDENTITY FOR TENSORS}{Shih Yu Chang}

\title{SHERMAN-MORRISON-WOODBURY IDENTITY FOR TENSORS}

\author{Shih Yu Chang\thanks{Department of Applied Data Science, San Jose State University, San Jose, CA 95192, USA 
  (\email{ shihyu.chang@sjsu.edu } ).}
}

\usepackage{amsopn}


\ifpdf
\hypersetup{
  pdftitle={SHERMAN-MORRISON-WOODBURY IDENTITY FOR TENSORS},
  pdfauthor={Shih Yu Chang}
}
\fi




\begin{document}

\maketitle

\begin{abstract}
In linear algebra, the Sherman–Morrison–Woodbury identity says that the inverse of a rank-$k$ correction of some matrix can be computed by doing a rank-k correction to the inverse of the original matrix. This identity is crucial to accelerate the matrix inverse computation when the matrix involves correction. Many scientific and engineering applications have to deal with this matrix inverse problem after updating the matrix, e.g., sensitivity analysis of linear systems, covariance matrix update in Kalman filter, etc. However, there is no similar identity in tensors. In this work, we will derive the Sherman–Morrison–Woodbury identity for invertible tensors first. Since not all tensors are invertible, we further generalize the Sherman–Morrison–Woodbury identity for tensors with Moore-Penrose generalized inverse by utilizing orthogonal projection of the correction tensor part into the original tensor and its Hermitian tensor. According to this new established the Sherman–Morrison–Woodbury identity for tensors, we can perform sensitivity analysis for multilinear systems by deriving the normalized upper bound for the solution of a multilinear system. Several numerical examples are also presented to demonstrate how the normalized error upper bounds are affected by perturbation degree of tensor coefficients.  
\end{abstract}

\begin{keywords}
Multilinear Algebra, Tensor Inverse, Moore-Penrose Inverse, Sensitivity Analysis, Multilinear System
\end{keywords}

\begin{AMS}
65R10, 33A65, 35K05, 62G20, 65P05
\end{AMS}

\section{Introduction}



Tensors are higher-order generalizations of matrices and vectors, which have been studied abroadly due to
the practical applications in many scientific and engineering fields~\cite{kolda2009tensor, qi2017tensor, wei2016theory}, including psychometrics~\cite{tucker1966some}, digital image restorations~\cite{qi2010higher}, quantum
entanglement~\cite{hu2012geometric, qi2012minimum}, signal processing~\cite{de2007fourth, ortiz2019sparse, zhou2019tensor, kanatsoulis2019regular},  high-order statistics~\cite{cardoso1999high, ding2018fast}, automatic control~\cite{precup2012novel}, spectral hypergraph theory~\cite{hu2015laplacian, cooper2020adjacency, sawilla2008survey}, higher order Markov chains~\cite{li2019c, liu2019relaxation,Kwak2015High}, magnetic resonance
imaging~\cite{qi2010higher, qi2008d}, algebraic geometry~\cite{cartwright2013number, li2012characteristic}, Finsler geometry~\cite{balan2010applications}, image authenticity verification~\cite{zhang2013gradient}, and so on. More applications about tensors can be found at~\cite{kolda2009tensor, qi2017tensor}


In tensor data analysis, the data sets are represented by tensors, while the associated multilinear algebra problems can be formulated for various data-processing tasks, such as web-link analysis~\cite{kolda2005higher, kolda2006tophits}, document analysis~\cite{cai2006tensor, liu2005text}, information retrieval~\cite{ng2011multirank, li2012har}, model learning~\cite{cheng2020learning, brinton2016mining}, data-model reduction~\cite{shin2016fully, cheng2018scaling, phan2018error}, model prediction~\cite{chen2019incremental}, movie recommendation~\cite{tang2012cross}, and videos analysis~\cite{li2011general, tao2007general}, numerical PDE~\cite{sahoo2020reverse}. Most of the above-stated tensor-formulated methodologies depend on the solution to the following \emph{tensor equation} (a.k.a. \emph{multilinear system of equations}~\cite{kolda2005higher, OVMMUPH2011}): 
\begin{eqnarray}\label{eq: tensor eq for introduction}
\mathcal{A}\star_N\mathcal{X} &=& \mathcal{B},
\end{eqnarray}
where $\mathcal{A}, \mathcal{B}$ are tensors and $\star_N$ denotes the \emph{Einstein product}with order $N$~\cite{sun2018generalized}. Basically, there are two main approaches to solve the unknown tensor $\mathcal{X}$. The first approach is to solve the Eq.~\eqref{eq: tensor eq for introduction} iteratively. Three primary iterative algorithms are \emph{Jacobi method}, \emph{Gauss-Seidel method}, and \emph{Successive Over-Relaxation} (SOR) \emph{method}~\cite{saad2003iterative}. Nonetheless, in order to make these iterative algorithms converge, one has to provide some constraints during the tensor update at each iteration. For example, the updated tensor is required to be positive-definite and/or diagonally dominant~\cite{li2017splitting, cui2019preconditioned, qi2018tensor, qi2017tensor}. When the tensor $\mathcal{A}$ is a special type of tensor, namely $\mathcal{M}$-tensors, the Eq.~\eqref{eq: tensor eq for introduction} becomes a $\mathcal{M}$-equation. Ding and Wei~\cite{ding2016solving} prove that a nonsingular $\mathcal{M}$-equation with a positive $\mathcal{B}$ always has a unique positive solution. Several iterative algorithms are proposed to solve multilinear nonsingular $\mathcal{M}$-equations by generalizing the classical iterative methods and the Newton method for linear systems. Furthermore, they also apply the $\mathcal{M}$-equations to solve nonlinear differential equations. In~\cite{xie2018tensor}, the authors solve these multilinear system of equations, especially focusing on symmetric $\mathcal{M}$-equations, by proposeing the rank-1 approximation of the tensor $\mathcal{A}$ and apply iterative tensor method to solve symmetric $\mathcal{M}$-equations. Their numerical examples demonstrate that the tensor methods could be more efficient than the Newton method for some $\mathcal{M}$-equations.

Sometimes, it is difficult to set a proper value of the underlying parameter (such as step size) in the solution-update equation to accelerate the convergence speed, while people often apply heuristics to determine such a parameter case by case. The other approach is to solve the unknown tensor $\mathcal{X}$ at the Eq.~\eqref{eq: tensor eq for introduction} through the tensor inversion. Brazell et al.~\cite{brazell2013solving} proposed the concept of the inverse of an even-order square tensor by adopting Einstein product, which provides a new direction to study tensors and tensor equations that model many phenomena in engineering and science~\cite{lai2009introduction}. In~\cite{bu2014inverse}, the authors give some basic properties for the left (right) inverse, rank and product of tensors. The existence of order 2 left (right) inverses of tensors is also characterized and several tensor properties, e.g., some equalities and inequalities on the tensor rank, independence between the rank of a uniform hypergraph and the ordering of its vertices, rank characteristics of the Laplacian tensor, are established through inverses of tensors. Since the key step in solving the Eq.~\eqref{eq: tensor eq for introduction} is to characterize the inverse of the tensor $\mathcal{A}$, Sun et al. in~\cite{sun2018generalized} define different types of inverse, namely, $i$-inverse ($i = 1, 2, 5$) and group inverse of tensors based on a general product of tensors. They explore properties of the generalized inverses of tensors on solving tensor equations and computing formulas of block tensors. The representations for the 1-inverse and group inverse of some block tensors are also established. They then use the 1-inverse of tensors to give the solutions of a multilinear system represented by tensors. The authors in~\cite{sun2018generalized} also proved that, for a tensor equation with invertible tensor $\mathcal{A}$, the solution is unique and can be expressed by the inverse of the tensor $\mathcal{A}$. 

However, the coefficient tensor $\mathcal{A}$ in the Eq.~\eqref{eq: tensor eq for introduction} is not always invertible, for example, when the tensor $\mathcal{A}$ is not square. Sun et al.~\cite{sun2016moore} extend the tensor inverse proposed by Brazell et al.~\cite{brazell2013solving} to the Moore–Penrose inverse via Einstein product, and a concrete representation for the Moore–Penrose inverse can be obtained by utilzing the singular value decomposition (SVD) of the tensor. An important application of the Moore–Penrose inverse is the tensor nearness problem associated with tensor equation with Einstein product, which can be expressed as follows~\cite{liang2019further}. Let $\mathcal{X}_0$ be a given tensor, find the tensor $\hat{\mathcal{X}} \in \Omega$ such that 
\begin{eqnarray}\label{eq: tensor nearness problem}
\left\Vert \hat{\mathcal{X}} - \mathcal{X}_0 \right\Vert &=& \min\limits_{\mathcal{X} \in \Omega} \left\Vert \mathcal{X} - \mathcal{X}_0 \right\Vert,
\end{eqnarray}
where $\left\Vert \cdot \right\Vert$ is the Frobenius norm, and $\Omega$ is the solution set of tensor equation
\begin{eqnarray}\label{eq: tensor nearness problem constraints}
\mathcal{A}\star_N\mathcal{X} &=& \mathcal{B}.
\end{eqnarray}
The tensor nearness problem is a generalization of the matrix nearness problem that are studied in many areas of applied matrix computations~\cite{dhillon2008matrix, guglielmi2015low}. The tensor $\mathcal{X}_0$ in Eq.~\eqref{eq: tensor nearness problem}, may be obtained by experimental measurement values and statistical distribution information, but it may not satisfy the desired form and the minimum error requirement, while the optimal estimation $\hat{\mathcal{X}}$ is the tensor that not only satisfies these restrictions but also best approximates $\mathcal{X}_0$. Under certain conditions, it will be proved that the solution to the tensor nearness problem~\eqref{eq: tensor nearness problem} is unique, and can be represented by means of the Moore–Penrose inverses of the known tensors~\cite{behera2017further}. Another situation to apply Moore-Penrose inverse is that the the given tensor equation in Eq.~\eqref{eq: tensor eq for introduction} has a non-square coefficient tensor $\mathcal{A}$. The associated least-squares problem for the solution in Eq.~\eqref{eq: tensor eq for introduction} can be obtained by solving the Moore-Penrose inverse of the coefficient tensor~\cite{brazell2013solving}. Several works to discuss the construction of Moore-Penrose inverse and how to apply this inverse to build necessary and sufficient conditions for the existence of the solution of tensor equations can be found at~\cite{ji2018drazin, jin2017generalized, behera2017further}.


In matrix theory, the Sherman–Morrison–Woodbury identity says that the inverse of a rank-$k$ correction of some matrix can be obtained by computing a rank-$k$ correction to the inverse of the original matrix. The Sherman–Morrison–Woodbury identity for matrix can be stated as following:
\begin{eqnarray}
(\mathbf{A} + \mathbf{U} \mathbf{B} \mathbf{V} )^{-1} = \mathbf{A}^{-1} -
\mathbf{A}^{-1} \mathbf{U}( \mathbf{B}^{-1} + \mathbf{V} \mathbf{A}^{-1} \mathbf{U} )^{-1} \mathbf{V} \mathbf{A}^{-1},
\end{eqnarray}
where $\mathbf{A}$ is a $n \times n$ matrix, $\mathbf{U}$ is a $n \times k$ matrix, $\mathbf{B}$ is a $k \times k$ matrix, and $\mathbf{V}$ is a $k \times n$ matrix. This identity is useful in numerical computations when $\mathbf{A}^{-1}$ has already been computed but the goal is to compute $(\mathbf{A} + \mathbf{U} \mathbf{B} \mathbf{V} )^{-1}$. With the inverse of $\mathbf{A}$ available, it is only necessary to find the inverse of $\mathbf{B}^{-1} + \mathbf{V} \mathbf{A}^{-1} \mathbf{U}$ in order to obtain the result using the right-hand side of the identity. If the matrix $\mathbf{B}$ has a much smaller dimension than $\mathbf{A}$, this is much easier than inverting $\mathbf{A} + \mathbf{U} \mathbf{B} \mathbf{V}$ directly. A common application is finding the inverse of a low-rank update $\mathbf{A} + \mathbf{U} \mathbf{B} \mathbf{V}$ of $\mathbf{A}$ when $\mathbf{U}$ only has a few columns and $\mathbf{V}$ also has only a few rows, or finding an approximation of the inverse of the matrix $\mathbf{A}+\mathbf{C}$ where the matrix $\mathbf{C}$ can be approximated by a low-rank matrix $\mathbf{U} \mathbf{B} \mathbf{V}$ via the singular value decomposition (SVD).

Analogously, we expect to have the Sherman–Morrison–Woodbury identity for tensors to facilitate the tensor inversion computation with those benefits in the matrix inversion computation when the correction of the original tensors is required. The Sherman–Morrison–Woodbury identity for tensors can be applied at various engineering and scientific areas, e.g., the tensor Kalman filter and recursive least squares methods~\cite{batselier2017tensor}. This identity can significantly speeds up the real time calculations of the tensor filter update because each new observation, which can be described with much lower dimension, can be treated as perturbation of the original covariance tensor. Similar to sensitivity analysis for linear systems~\cite{deif2012sensitivity}, if we wish to consider how the solution is affected by the perturbed of coefficients in the tensor $\mathcal{A}$ in Eq.~\eqref{eq: tensor eq for introduction}, we need to understand the relationship between the original tensor inverse and the perturbed tensor inverse. The Sherman–Morrison–Woodbury identity helps us to quantify the difference between the original solution and the perturbed solution of Eq.~\eqref{eq: tensor eq for introduction}. The contribution of this work can be summarized as follows.
\begin{enumerate}
\item We establish Sherman–Morrison–Woodbury identity for invertible tensors.
\item Because not every tensors are invertible, we generalize the Sherman–Morrison\\ 
-Woodbury identity for tensors with Moore-Penrose inverse. 
\item The sensitivity analysis is provided to the solution of a multilinear system 
when coefficient tensors are perturbed.  
\end{enumerate}

The paper is organized as follows. Preliminaries of tensors are given in Section~\ref{sec: Preliminaries of Tensors}.
In Section~\ref{sec:Identity for Invertible Tensors}, we will derive the Sherman–Morrison–Woodbury identity
for invertible tensors. In Section~\ref{sec: Identity for Tensors with Moore-Penrose Inverse}, the Sherman–Morrison–Woodbury identity is generalized for Moore-Penrose tensor inverse, and two illustrative examples about applying this identity are also presented. We apply Sherman–Morrison–Woodbury identity to analyze the sensitivity of perturbed multiplinear systems in Section~\ref{sec:Sensitivity Analysis for Multilinear Systems}. Finally, the conclusions are given in Section~\cref{sec:Conclusions}.

\section{Preliminaries of Tensors}\label{sec: Preliminaries of Tensors}

In this work, we denote scalars by lower-case letters (e.g., $d, e, f$), vectors by boldface lower-case letters (e.g., $\mathbf{d}, \mathbf{e}, \mathbf{f}$), matrices by boldface capital letters (e.g., $\mathbf{D},  \mathbf{E}, \mathbf{F}$), and tensors by calligraphic letters (e.g.,   $\mathcal{D},  \mathcal{E}, \mathcal{F}$), respectively.
Tensors are multiarray of values which are higher-dimensional generlization of vectors and matrices.
Given a positive integer $N$, let $[N] = {1, \cdots ,N}$. An order $N$ tensor $\mathcal{A}= (a_{i_1, \cdots, i_N})$, 
where $1 \leq i_j \leq I_j$ for $j \in [N]$, is a multidimensional
array with $I_1 \times I_2 \times \cdots \times I_{N}$ entries. Let $\mathbb{C}^{I_1 \times \cdots \times I_N}$ and $\mathbb{R}^{I_1 \times \cdots \times I_N}$ be the sets of the
order $N$ dimension $I_1 \times \cdots \times I_N$ tensors over the complex field $\mathbb{C}$ and the real
field $\mathbb{R}$, respectively.  For example, $\mathcal{A} \in \mathbb{C}^{I_1 \times \cdots \times I_N}$ is a multiway array with $N$-th order and $I_1, I_2, \cdots , I_N$ dimension in the first, second, · · · , $N$th direction,
respectively. Each entry of $\mathcal{A}$ is represented by $a_{i_1, \cdots, i_N}$. For $N = 4$, $\mathcal{A} \in \mathbb{C}^{I_1 \times I_2 \times I_3 \times I_4}$ is a fourth order tensor with entries as $a_{i_1, i_2, i_3, i_4}$.

For tensors $\mathcal{A} = (a_{i_1, \cdots, i_M, j_1, \cdots,j_N}) \in \mathbb{C}^{I_1 \times \cdots \times I_M\times
J_1 \times \cdots \times J_N}$ and \\
$\mathcal{B} = (a_{i_1, \cdots, i_M, j_1, \cdots,j_N}) \in \mathbb{C}^{I_1 \times \cdots \times I_M\times
J_1 \times \cdots \times J_N}$, the \emph{tensor addition} is defined as 
\begin{eqnarray}\label{eq: tensor addition definition}
(\mathcal{A} + \mathcal{B} )_{i_1, \cdots, i_M, j_1, \cdots, j_N} &=&
 a_{i_1, \cdots, i_M,  j_1, \cdots, j_N} + b_{i_1, \cdots, i_M,  j_1, \cdots, j_N}. 
\end{eqnarray}
If $M=N$ for the tensor $\mathcal{A} = (a_{i_1, \cdots, i_M, j_1, \cdots,j_N}) \in \mathbb{C}^{I_1 \times \cdots \times I_M\times
J_1 \times \cdots \times J_N}$, the tensor $\mathcal{A}$ is named as a \emph{square tensor}.

For tensors $\mathcal{A} = (a_{i_1, \cdots, i_M, j_1, \cdots,j_N}) \in \mathbb{C}^{I_1 \times \cdots \times I_M\times
J_1 \times \cdots \times J_N}$ and \\
$\mathcal{B} = (b_{j_1, \cdots, j_N, k_1, \cdots,k_L}) \in \mathbb{C}^{J_1 \times \cdots \times J_N\times K_1 \times \cdots \times K_L}$, the \emph{Einstein product} with order $N$ $\mathcal{A} \star_{N} \mathcal{B} \in  \mathbb{C}^{I_1 \times \cdots \times I_M\times
K_1 \times \cdots \times K_L}$ is defined as 
\begin{eqnarray}\label{eq: Einstein product definition}
(\mathcal{A} \star_{N} \mathcal{B} )_{i_1, \cdots, i_M,k_1, \cdots  ,k_L} &=& \sum\limits_{j_1, \cdots, j_N} a_{i_1, \cdots, i_M, j_1, \cdots,j_N}b_{j_1, \cdots, j_N, k_1, \cdots,k_L}. 
\end{eqnarray}
This tensor product reduces to the standard matrix multiplication when we have $L = M = N = 1$, which also contains the tensor–vector product and the tensor–matrix product as special cases.  
We need following definitions about tensors. We begin with zero tensor definition. 

\begin{definition}\label{def: zero tensor}
A tensor that all its entries are zero is called zero tensor, denoted as  $\mathcal{O}$. 
\end{definition}

The identity tensor is defined as following:
\begin{definition}\label{def: identity tensor}
An identity tensor $\mathcal{I} \in  \mathbb{C}^{I_1 \times \cdots \times I_N\times
J_1 \times \cdots \times J_N}$ is defined as 
\begin{eqnarray}\label{eq: identity tensor definition}
(\mathcal{I})_{i_1 \times \cdots \times i_N\times
j_1 \times \cdots \times j_N} = \prod_{k = 1}^{N} \delta_{i_k, j_k},
\end{eqnarray}
where $\delta_{i_k, j_k} = 1$ if $i_k  = j_k$, otherwise $\delta_{i_k, j_k} = 0$.
\end{definition}

In order to define a \emph{Hermitian} tensor, we need following conjugate transpose operation of a tensor.  

\begin{definition}\label{def: tensor conjugate transpose}
Let $\mathcal{A} = (a_{i_1, \cdots, i_M, j_1, \cdots,j_N}) \in \mathbb{C}^{I_1 \times \cdots \times I_M\times J_1 \times \cdots \times J_N}$ be a given tensor, then its conjugate transpose, denoted as
$\mathcal{A}^{H}$, is defined as 
\begin{eqnarray}\label{eq:tensor conjugate transpose definition}
(\mathcal{A}^H)_{ j_1, \cdots,j_N,i_1, \cdots, i_M}  =  
\overline{a_{i_1, \cdots, i_M,j_1, \cdots,j_N}},
\end{eqnarray}
where the over line indicates the complex conjugate of the complex number \\
$a_{i_1, \cdots, i_M,j_1, \cdots,j_N}$. If a tenser with the property $ \mathcal{A}^H = \mathcal{A}$, this tensor is named as \emph{Hermitian} tensor. 
\end{definition}

The meaning of the inverse of a tensor is provided as following: 
\begin{definition}\label{def: inverse of a tensor}
For  a square tensor $\mathcal{A} = (a_{i_1, \cdots, i_M, j_1, \cdots,j_M}) \in$ \\  $\mathbb{C}^{I_1 \times \cdots \times I_M\times I_1 \times \cdots \times I_M}$, if there exists $\mathcal{X} \in \mathbb{C}^{I_1 \times \cdots \times I_M\times I_1 \times \cdots \times I_M}$ such that 
\begin{eqnarray}\label{eq:tensor invertible definition}
\mathcal{A} \star_M \mathcal{X} = \mathcal{X} \star_M \mathcal{A} = \mathcal{I},
\end{eqnarray}
then such $\mathcal{X}$ is called as the inverse of the tensor $\mathcal{A}$, represented by $\mathcal{A}^{-1}$. 
\end{definition}

\begin{definition}\label{def: Moore-Penrose Inverse}
Given a tensor $\mathcal{A} \in \mathbb{C}^{I_1 \times \cdots \times I_M\times J_1 \times \cdots \times J_N}$,  then the tensor $\mathcal{X} \in  \mathbb{C}^{ J_1 \times \cdots \times J_N \times I_1 \times \cdots \times I_M}$, satisfying the following tensor equations:
\begin{eqnarray}\label{eq: Moore-Penrose Inverse rules}
(1) \mathcal{A} \star_N \mathcal{X} \star_M \mathcal{A} = \mathcal{A}, ~~~~~~
(2) \mathcal{X} \star_M \mathcal{A} \star_N \mathcal{X} = \mathcal{X}, ~~~~ \nonumber \\
(3) (\mathcal{A} \star_N \mathcal{X})^{H} = \mathcal{A} \star_M \mathcal{X}, ~~
(4) (\mathcal{X} \star_M \mathcal{A})^{H} = \mathcal{X} \star_M \mathcal{A},
\end{eqnarray}
is called the Moore-Penrose inverse of the tensor $\mathcal{A}$, denoted as $\mathcal{A}^{\dagger}$. 
\end{definition}

The trace of a tensor is defined as the summation of all the diagonal entries as 
\begin{eqnarray}\label{eq: tensor trace def}
\mathrm{Tr}(\mathcal{A}) = \sum\limits_{1 \leq i_j \leq I_j, j \in [N]} \mathcal{A}_{i_1, \cdots, i_M,i_1, \cdots, i_M}.
\end{eqnarray}
Then, we can define the inner product of two tensors $\mathcal{A}, \mathcal{B} \in \mathbb{C}^{I_1 \times \cdots \times I_M\times J_1 \times \cdots \times J_N}$ as 
\begin{eqnarray}\label{eq: tensor inner product def}
\langle \mathcal{A}, \mathcal{B} \rangle = \mathrm{Tr}(\mathcal{A}^H \star_M \mathcal{B}).
\end{eqnarray}
From the definition of tensor inner product, the Frobenius norm of a tensor $\mathcal{A}$ can be defined as 
\begin{eqnarray}\label{eq:Frobenius norm}
\left\Vert \mathcal{A} \right\Vert = \sqrt{\langle \mathcal{A}, \mathcal{A} \rangle}.
\end{eqnarray}

An unfolded tensor is a matrix obtained by reorganizing the entries of a tensor into a two-dimensional array. For the tensor space $\mathbb{C}^{I_1 \times \cdots \times I_M\times J_1 \times \cdots \times J_N}$ and the matrix space $\mathbb{C}^{(I_1 \cdots  I_M)\times (J_1 \cdots J_N)}$, we define a map $\varphi$ as follows:
\begin{eqnarray}\label{eq:unfold mapping}
\varphi: \mathbb{C}^{I_1 \times \cdots \times I_M\times J_1 \times \cdots \times J_N} \rightarrow 
\mathbb{C}^{(I_1 \cdots  I_M)\times (J_1 \cdots J_N)} \nonumber \\
\mathcal{A} = (a_{i_1, \cdots, i_M, j_1, \cdots,j_N}) \rightarrow (\mathbf{A}_{\phi(\mathbf{i}, \mathbb{I}), \phi(\mathbf{j}, \mathbb{J})}),
\end{eqnarray}
where $\phi$ is an index mapping function from tensor indices to matrix indices with arguments of row subscripts $\mathbf{i} = \{i_1, \cdots, i_M\}$ and row dimensions of $\mathcal{A}$, denoted as $\mathbb{I} = \{I_1, \cdots, I_M \}$. The relation $\phi(\mathbf{i}, \mathbb{I})$ can be expressed as 
\begin{eqnarray}\label{row indices mapping}
\phi(\mathbf{i}, \mathbb{I}) = i_1 + \sum\limits_{m=2}^{M}(i_m - 1) \prod\limits_{u = 1}^{m-1} I_u.
\end{eqnarray}
Similarly, $\phi(\mathbf{j}, \mathbb{J})$ is an index mapping relation for column dimensions of $\mathcal{A}$ which can be expressed as 
\begin{eqnarray}\label{column indices mapping}
\phi(\mathbf{j}, \mathbb{J}) = j_1 + \sum\limits_{n=2}^{N}(j_n - 1) \prod\limits_{v = 1}^{n-1} J_v,
\end{eqnarray}
where $\mathbf{j} = \{j_1, \cdots, j_N\}$ and column dimensions of $\mathcal{A}$, denoted as $\mathbb{J} = \{J_1, \cdots, J_N \}$. We will use this unfolding mapping $\varphi$ to build the condition of the existence of an inverse of a tensor. 

Following definition, which is based on the tensor unfolding map introduced by the Eq.~\eqref{eq:unfold mapping}, is required to determine when a given square tensor is invertible.
\begin{definition}\label{def: unfolding rank of tensor}
For a tensor $\mathcal{A} \in \mathbb{C}^{I_1 \times \cdots \times I_M\times J_1 \times \cdots \times J_N}$, and the map $\varphi$ defined by the Eq.~\eqref{eq:unfold mapping}, the unfolding rank of a tensor $\mathcal{A}$ is defined as the rank of the mapped matrix $\varphi(\mathcal{A})$. If we have $\varphi(\mathcal{A}) = I_1 \cdots I_M$ (the multiplication of all integers $I_1,  \cdots, I_M$ together), we say that $\mathcal{A}$ is full row rank. On ther other hand, if we have $\varphi(\mathcal{A}) = J_1 \cdots J_N$ (the multiplication of all integers $J_1,  \cdots, J_N$ together), we say that $\mathcal{A}$ is full column rank.
\end{definition}
Based on such unfolding rank definition, we are able to utilize this to give the sufficient and the necessary conditions for the existence of a given tensor provided by the following Lemma~\ref{lemma:condition for the existence of a tensor inverse}. 

\begin{lemma}\label{lemma:condition for the existence of a tensor inverse}
A given tensor $\mathcal{A} \in  \mathbb{C}^{I_1 \times \cdots \times I_M\times I_1 \times \cdots \times I_M}$ is invertible if and only if the matrix $\varphi(\mathcal{A})$ is a full rank matrix with rank value $I_1  \cdots I_M$. 
\end{lemma}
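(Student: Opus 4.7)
The plan is to reduce the question about tensor invertibility to the familiar matrix criterion by showing that $\varphi$ is not merely a linear bijection but an algebra isomorphism between $(\mathbb{C}^{I_1 \times \cdots \times I_M \times I_1 \times \cdots \times I_M}, \star_M)$ and $(\mathbb{C}^{(I_1\cdots I_M)\times(I_1\cdots I_M)}, \cdot)$. Once that is in hand, the equation $\mathcal{A}\star_M \mathcal{X} = \mathcal{X}\star_M\mathcal{A} = \mathcal{I}$ becomes, after applying $\varphi$, the ordinary matrix equation $\varphi(\mathcal{A})\varphi(\mathcal{X}) = \varphi(\mathcal{X})\varphi(\mathcal{A}) = \mathbf{I}$, and the classical matrix fact that such an $\varphi(\mathcal{X})$ exists iff $\varphi(\mathcal{A})$ has rank $I_1\cdots I_M$ gives the lemma.

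First I would check that $\varphi$ is a bijection. The index mapping $\phi(\mathbf{i},\mathbb{I}) = i_1 + \sum_{m=2}^{M}(i_m-1)\prod_{u=1}^{m-1}I_u$ of Eq.~\eqref{row indices mapping} is the standard lexicographic enumeration of the multi-index $(i_1,\dots,i_M)$, and it is straightforward to verify that as $(i_1,\dots,i_M)$ ranges over $[I_1]\times\cdots\times[I_M]$ the value $\phi(\mathbf{i},\mathbb{I})$ ranges bijectively over $[I_1\cdots I_M]$. The same holds for the column multi-index, so $\varphi$ is a linear bijection of vector spaces of the same dimension.

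Next, I would verify $\varphi(\mathcal{I}) = \mathbf{I}$ and, crucially, that $\varphi(\mathcal{A}\star_M\mathcal{B}) = \varphi(\mathcal{A})\,\varphi(\mathcal{B})$ for square tensors $\mathcal{A},\mathcal{B}\in\mathbb{C}^{I_1\times\cdots\times I_M\times I_1\times\cdots\times I_M}$. The first is immediate from Eq.~\eqref{eq: identity tensor definition}, since $\prod_k \delta_{i_k,j_k}=1$ exactly when $\phi(\mathbf{i},\mathbb{I}) = \phi(\mathbf{j},\mathbb{I})$. For the second, expanding the $(\phi(\mathbf{i},\mathbb{I}),\phi(\mathbf{k},\mathbb{I}))$ entry of $\varphi(\mathcal{A}\star_M\mathcal{B})$ using Eq.~\eqref{eq: Einstein product definition} gives a sum over the middle multi-index $(j_1,\dots,j_M)$, which under the bijection $\phi(\cdot,\mathbb{I})$ is exactly the sum over a single linear index in $[I_1\cdots I_M]$; this matches the definition of matrix multiplication on $\varphi(\mathcal{A})\varphi(\mathcal{B})$. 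This bookkeeping with the composite index is the main technical obstacle, though it is really just a careful unfolding of the same enumeration on both sides.

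Finally, I would assemble the equivalences: $\mathcal{A}$ is invertible (in the sense of Definition~\ref{def: inverse of a tensor}) iff there exists $\mathcal{X}$ with $\mathcal{A}\star_M\mathcal{X} = \mathcal{X}\star_M\mathcal{A} = \mathcal{I}$; applying $\varphi$ and using the homomorphism property, this is equivalent to the existence of a matrix $\mathbf{X} = \varphi(\mathcal{X})$ with $\varphi(\mathcal{A})\mathbf{X} = \mathbf{X}\varphi(\mathcal{A}) = \mathbf{I}$, which by standard linear algebra holds iff the square matrix $\varphi(\mathcal{A}) \in \mathbb{C}^{(I_1\cdots I_M)\times(I_1\cdots I_M)}$ has full rank $I_1\cdots I_M$. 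Because $\varphi$ is a bijection, the inverse matrix can always be pulled back to a tensor $\mathcal{X} = \varphi^{-1}(\varphi(\mathcal{A})^{-1})$, completing both directions.
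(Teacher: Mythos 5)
Your proposal is correct. The paper itself gives no argument for this lemma and simply defers to the cited reference, but the standard proof there (going back to the tensor-unfolding isomorphism of Brazell et al.) is exactly what you describe: $\varphi$ is a linear bijection that carries $\star_M$ to matrix multiplication and $\mathcal{I}$ to $\mathbf{I}$, so tensor invertibility is equivalent to invertibility, hence full rank, of $\varphi(\mathcal{A})$. Your sketch fills in precisely the details the paper omits, and the only step requiring real care --- the verification that the lexicographic index bijection turns the Einstein sum over the middle multi-index into the single-index sum of matrix multiplication --- is correctly identified and handled.
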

\textbf{Proof:}
See~\cite{liang2019further}. $\hfill \Box$

\section{Identity for Invertible Tensors}
\label{sec:Identity for Invertible Tensors}

The purpose of this section is to prove Sherman–Morrison–Woodbury identity for invertible tensors. 

\begin{theorem}[Sherman–Morrison–Woodbury identity for invertible tensors.]\label{thm:InvertibleWoodbury}
Given invertible tensors $\mathcal{A} \in  \mathbb{C}^{I_1 \times \cdots \times I_M\times I_1 \times \cdots \times I_M}$ and 
$\mathcal{B} \in  \mathbb{C}^{I_1 \times \cdots \times I_K\times I_1 \times \cdots \times I_K}$, and tensors $\mathcal{U} \in  \mathbb{C}^{I_1 \times \cdots \times I_M\times I_1 \times \cdots \times I_K}$ and $\mathcal{V} \in  \mathbb{C}^{I_1 \times \cdots \times I_K \times I_1 \times \cdots \times I_M}$, if the tensor $(\mathcal{B}^{-1} + \mathcal{V} \star_M \mathcal{A}^{-1} \star_M \mathcal{U})$ is invertible, we have following identiy:
\begin{eqnarray}\label{eq: Sherman Morrison Woodbury identity for invertible tensors}
(\mathcal{A} + \mathcal{U} \star_K \mathcal{B} \star_K \mathcal{V} )^{-1} = \mathcal{A}^{-1} - ~~~~~~~~~~~~~~~~~~~~~~~~~~~~~~~~~~~~~~~~~~~~~~~~~~~~~~~~ \nonumber \\
\mathcal{A}^{-1} \star_M \mathcal{U} \star_K ( \mathcal{B}^{-1} + \mathcal{V} \star_M \mathcal{A}^{-1} \star_M \mathcal{U} )^{-1} \star_K \mathcal{V} \star_M \mathcal{A}^{-1}.
\end{eqnarray}
\end{theorem}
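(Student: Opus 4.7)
The plan is to prove the identity by direct verification: I would multiply the proposed right-hand side by $(\mathcal{A} + \mathcal{U} \star_K \mathcal{B} \star_K \mathcal{V})$ and check that the result is the identity tensor $\mathcal{I}$. This mirrors the classical matrix proof of the Sherman--Morrison--Woodbury identity and requires no machinery beyond the Einstein product rules recorded in Section~\ref{sec: Preliminaries of Tensors}.

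First I would record the algebraic properties of $\star_M$ and $\star_K$ that the computation leans on. From the component formula in Eq.~\eqref{eq: Einstein product definition}, both products are bilinear and associative in the usual sense, and they also associate with each other whenever the shapes line up -- for instance, $(\mathcal{U} \star_K \mathcal{B}) \star_K \mathcal{V} \star_M \mathcal{A}^{-1} = \mathcal{U} \star_K (\mathcal{B} \star_K \mathcal{V} \star_M \mathcal{A}^{-1})$ -- because the $K$-sum and the $M$-sum range over disjoint index blocks and may be freely reordered. I would also invoke $\mathcal{A} \star_M \mathcal{I} = \mathcal{A} = \mathcal{I} \star_M \mathcal{A}$, the defining relation $\mathcal{A} \star_M \mathcal{A}^{-1} = \mathcal{A}^{-1} \star_M \mathcal{A} = \mathcal{I}$, and the analogous $\star_K$-identities for $\mathcal{B}$ and for the Schur-complement-like tensor $\mathcal{C} \define \mathcal{B}^{-1} + \mathcal{V} \star_M \mathcal{A}^{-1} \star_M \mathcal{U}$, whose invertibility is part of the hypothesis.

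Second, writing the candidate inverse as
\[
\mathcal{W} \define \mathcal{A}^{-1} - \mathcal{A}^{-1} \star_M \mathcal{U} \star_K \mathcal{C}^{-1} \star_K \mathcal{V} \star_M \mathcal{A}^{-1},
\]
I would expand $(\mathcal{A} + \mathcal{U} \star_K \mathcal{B} \star_K \mathcal{V}) \star_M \mathcal{W}$ into four terms. The leading term collapses to $\mathcal{I}$ via $\mathcal{A} \star_M \mathcal{A}^{-1} = \mathcal{I}$; factoring $\mathcal{U}$ on the left and $\mathcal{V} \star_M \mathcal{A}^{-1}$ on the right of the remaining three terms leaves the bracketed inner expression
\[
\mathcal{B} \;-\; \mathcal{C}^{-1} \;-\; \mathcal{B} \star_K \mathcal{V} \star_M \mathcal{A}^{-1} \star_M \mathcal{U} \star_K \mathcal{C}^{-1}.
\]
The decisive algebraic step is to combine the last two terms as $-\mathcal{B} \star_K (\mathcal{B}^{-1} + \mathcal{V} \star_M \mathcal{A}^{-1} \star_M \mathcal{U}) \star_K \mathcal{C}^{-1} = -\mathcal{B} \star_K \mathcal{C} \star_K \mathcal{C}^{-1} = -\mathcal{B}$, so the bracket vanishes and the product equals $\mathcal{I}$. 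To conclude that $\mathcal{W}$ is a genuine two-sided inverse I would either repeat the computation with $\mathcal{W}$ on the left by a mirror factorization, or else invoke Lemma~\ref{lemma:condition for the existence of a tensor inverse}: since $\mathcal{A} + \mathcal{U} \star_K \mathcal{B} \star_K \mathcal{V}$ is square of the same shape as $\mathcal{A}$, the existence of a one-sided inverse $\mathcal{W}$ already forces $\varphi(\mathcal{A} + \mathcal{U} \star_K \mathcal{B} \star_K \mathcal{V})$ to have full rank, hence a unique two-sided inverse, which must then coincide with $\mathcal{W}$.

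The main obstacle is purely bookkeeping: the four- and five-factor Einstein products are non-commutative, and one must lean on the mixed associativity between $\star_M$ and $\star_K$ to freely reassociate during the factorization of the three residual terms. I would justify that compatibility once from Eq.~\eqref{eq: Einstein product definition} -- the two contractions are over disjoint index blocks, so their summations commute -- and then manipulate the expressions as if they were matrix products, which is exactly where the matrix proof of Sherman--Morrison--Woodbury transfers verbatim.
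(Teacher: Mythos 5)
Your proposal is correct and follows essentially the same route as the paper: a direct verification that the candidate tensor is an inverse, using the same key cancellation $\mathcal{U} + \mathcal{U}\star_K\mathcal{B}\star_K\mathcal{V}\star_M\mathcal{A}^{-1}\star_M\mathcal{U} = \mathcal{U}\star_K\mathcal{B}\star_K(\mathcal{B}^{-1}+\mathcal{V}\star_M\mathcal{A}^{-1}\star_M\mathcal{U})$. The only cosmetic difference is that the paper carries out both the right- and left-multiplication checks explicitly, whereas you offer the (equally valid) shortcut of deducing two-sidedness from the unfolding rank condition of Lemma~\ref{lemma:condition for the existence of a tensor inverse}.
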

\textbf{Proof: }
The identiy can be proven by checking that $(\mathcal{A} + \mathcal{U} \star_K \mathcal{B} \star_K \mathcal{V})$ multiplies its alleged inverse on the right side of the Sherman–Morrison–Woodbury identity gives the identity matrix (To save space, we omit Einstein product symbol,  $\star$, between two tensors.):
\begin{eqnarray}
(\mathcal{A} + \mathcal{U} \mathcal{B} \mathcal{V}) [\mathcal{A}^{-1} - \mathcal{A}^{-1} \mathcal{U} (\mathcal{B}^{-1}   +   \mathcal{V} \mathcal{A}^{-1} \mathcal{U} )^{-1} \mathcal{V} \mathcal{A}^{-1}]  ~~~~~~~~~~~~~~~~~~~~~~~~~~~~~~~~~~~~~~~~~~~~~~
\nonumber \\
 = \mathcal{I} + \mathcal{U}\mathcal{B}\mathcal{V}\mathcal{A}^{-1} - \mathcal{U}(\mathcal{B}^{-1} + \mathcal{V} \mathcal{A}^{-1}\mathcal{U})^{-1}\mathcal{V}\mathcal{A}^{-1} - 
\mathcal{U}\mathcal{B}\mathcal{V}\mathcal{A}^{-1}\mathcal{U}(\mathcal{B}^{-1} + \mathcal{V} \mathcal{A}^{-1}\mathcal{U})^{-1}\mathcal{V}\mathcal{A}^{-1}   ~~~~~ \nonumber \\
= (\mathcal{I} + \mathcal{U}\mathcal{B}\mathcal{V}\mathcal{A}^{-1}) - 
\left[  \mathcal{U}(\mathcal{B}^{-1} + \mathcal{V} \mathcal{A}^{-1}\mathcal{U})^{-1}\mathcal{V}\mathcal{A}^{-1} + \mathcal{U}\mathcal{B}\mathcal{V}\mathcal{A}^{-1}\mathcal{U}(\mathcal{B}^{-1} + \mathcal{V} \mathcal{A}^{-1}\mathcal{U})^{-1}\mathcal{V}\mathcal{A}^{-1}  \right]  \nonumber \\
= \mathcal{I} + \mathcal{U}\mathcal{B}\mathcal{V}\mathcal{A}^{-1} - (\mathcal{U} + \mathcal{U}\mathcal{B}\mathcal{V}\mathcal{A}^{-1}\mathcal{U})(\mathcal{B}^{-1} + \mathcal{V} \mathcal{A}^{-1} \mathcal{U})^{-1} \mathcal{V} \mathcal{A}^{-1}
 ~~~~~~~~~~~~~~~~~~~~~~~~~~~~~~~~~~~\nonumber \\
= \mathcal{I} + \mathcal{U}\mathcal{B}\mathcal{V}\mathcal{A}^{-1} - (\mathcal{U}\mathcal{B}(\mathcal{B}^{-1} + \mathcal{V}\mathcal{A}^{-1}\mathcal{U}) (\mathcal{B}^{-1} + \mathcal{V} \mathcal{A}^{-1} \mathcal{U})^{-1} \mathcal{V} \mathcal{A}^{-1}~~~~~~~~~~~~~~~~~~~~~~~~~~~~~~~ \nonumber \\
= \mathcal{I} + \mathcal{U}\mathcal{B}\mathcal{V}\mathcal{A}^{-1} - \mathcal{U}\mathcal{B}\mathcal{V}\mathcal{A}^{-1} = \mathcal{I} ~~~~~~~~~~~~~~~~~~~~~~~~~~~~~~~~~~~~~~~~~~~~~
\end{eqnarray}

Similar steps can be applied to prove this identity by multiplying the alleged inverse from the left side of $(\mathcal{A} + \mathcal{U} \mathcal{B} \mathcal{V})$:
\begin{eqnarray}
[\mathcal{A}^{-1} - \mathcal{A}^{-1} \mathcal{U} (\mathcal{B}^{-1}   +   \mathcal{V} \mathcal{A}^{-1} \mathcal{U} )^{-1} \mathcal{V} \mathcal{A}^{-1}] (\mathcal{A} + \mathcal{U} \mathcal{B} \mathcal{V}) ~~~~~~~~~~~~~~~~~~~~~~~~~~~~~~~~~~~~~~~~
\nonumber \\
 = \mathcal{I} + \mathcal{A}^{-1}\mathcal{U} (\mathcal{B}^{-1}   +   \mathcal{V} \mathcal{A}^{-1} \mathcal{U} )^{-1})^{-1}\mathcal{V} +  \mathcal{A}^{-1} \mathcal{U} \mathcal{B} \mathcal{V} - 
~~~~~~~~~~~~~~~~~~~~~~~~~~~~ \nonumber \\ 
\mathcal{A}^{-1} \mathcal{U} (\mathcal{B}^{-1}   +   \mathcal{V} \mathcal{A}^{-1} \mathcal{U} )^{-1})^{-1} \mathcal{V}\mathcal{A}^{-1} \mathcal{U} \mathcal{B} \mathcal{V} ~~~~~~~~~~~~~~~~~~~~~~~~~~~~~~~~~~~~~~~~ \nonumber \\
= (\mathcal{I} + \mathcal{A}^{-1}\mathcal{U}\mathcal{B}\mathcal{V}) - 
\mathcal{A}^{-1}\mathcal{U}(\mathcal{B}^{-1} + \mathcal{V} \mathcal{A}^{-1}\mathcal{U})^{-1}(\mathcal{V} + \mathcal{V}\mathcal{A}^{-1}\mathcal{U}\mathcal{B} \mathcal{V} ) ~~~~~~~~~~~~~~ \nonumber \\
= (\mathcal{I} + \mathcal{A}^{-1}\mathcal{U}\mathcal{B}\mathcal{V}) - 
\mathcal{A}^{-1}\mathcal{U}(\mathcal{B}^{-1} + \mathcal{V} \mathcal{A}^{-1}\mathcal{U})^{-1}(\mathcal{B}^{-1} + \mathcal{V}\mathcal{A}^{-1}\mathcal{U}) \mathcal{B}\mathcal{V} 
~~~~~~~~~~~ \nonumber \\
= (\mathcal{I} + \mathcal{A}^{-1}\mathcal{U}\mathcal{B}\mathcal{V})  -  
 \mathcal{A}^{-1}\mathcal{U}\mathcal{B}\mathcal{V} = \mathcal{I}  ~~~~~~~~~~~~~~~~~~~~~~~~~~~~~~~~~~~~~~~~~~~~~~~~~~~
\end{eqnarray}
Therefore, the identiy is established.   $\hfill \Box$

\section{Identity for Tensors with Moore-Penrose Inverse}
\label{sec: Identity for Tensors with Moore-Penrose Inverse}

In this section, we will extend our tensor inverse result from previous section to the Sherman–Morrison– 
 Woodbury identity for Moore-Penrose inverse in section~\ref{sec: Identity for Moore-Penrose Inverse Tensors}. Two illustrative examples for the Sherman–Morrison–Woodbury identity for Moore-Penrose inverse will be provided in section~\ref{sec: Example}.

\subsection{Identity for Moore-Penrose Inverse Tensors}
\label{sec: Identity for Moore-Penrose Inverse Tensors}

The goal of this section is to establish our main result: the Sherman–Morrison–Woodbury identity for Moore-Penrose inverse. We begin with the definitions about row space and column space of a given tensor. Let us define two symbols $\mathbb{I}_{M} \define \underbrace{1 \times \cdots \times 1 }_M$ and 
$\mathbb{I}_{N} \define \underbrace{1 \times \cdots \times 1}_N$. 

We define \emph{row-tensors} of a tensor $\mathcal{A} = (a_{i_1, \cdots, i_M, j_1, \cdots,j_N}) \in \mathbb{C}^{I_1 \times \cdots \times I_M\times
J_1 \times \cdots \times J_N}$ as subtensors $\mathbf{a}_{ i_1,\cdots,i_M}$ where $1 \leq i_k \leq I_k$ for $k \in [M]$. The entries in the row-tensor $\mathbf{a}_{ i_1,\cdots,i_M}$ are entries $a_{i_1,\cdots,i_M, j_1\cdots, j_N}$ where $1 \leq j_k \leq J_k$ for $k \in [N]$ but fix the indices of $i_1,\cdots,i_M$. Similarly, \emph{column-tensors} of a tensor $\mathcal{A}$ are subtensors $\mathbf{a}_{ j_1,\cdots,j_N}$ where $1 \leq j_k \leq J_k$ for $k \in [N]$. The entries in the column-tensor $\mathbf{a}_{ j_1,\cdots,j_N}$ are entries $a_{i_1,\cdots,i_M, j_1\cdots, j_N}$ where $1 \leq i_k \leq I_k$ for $k \in [M]$ but fix the indices of $j_1,\cdots,j_N$.

Let the tensor $\mathcal{A} \in \mathbb{C}^{I_1 \times \cdots \times I_M\times
J_1 \times \cdots \times J_N}$.  The right null space is defined as 
\begin{eqnarray}\label{eq: right null space def}
\mathfrak{N}_R(\mathcal{A}) &\define& \left\{ \mathbf{z} \in  \mathbb{C}^{
J_1 \times \cdots \times J_N \times \mathbb{I}_{M} }: \mathcal{A} \star_N \mathbf{z} = \mathcal{O} \right\} 
\end{eqnarray} 
Then the row space of $\mathcal{A}$ is defined as 
\begin{eqnarray}\label{eq: row space def}
\mathfrak{R}(\mathcal{A}) &\define& \bigg \{ \mathbf{y} \in  \mathbb{C}^{
J_1 \times \cdots \times J_N \times \mathbb{I}_{M}}: \mathbf{y} = \sum\limits_{i_1,\cdots,i_M}
\mathbf{a}_{ i_1,\cdots,i_M }x_{i_1,\cdots,i_M},  \nonumber \\
& & \mbox{where $x_{i_1,\cdots,i_M} \in \mathbb{C}$
and $\mathbf{a}_{ i_1,\cdots,i_M }  \in  \mathbb{C}^{
J_1 \times \cdots \times J_N \times \mathbb{I}_{M}}$.} \bigg \} 
\end{eqnarray} 
Now from the definition of right null space we have $\mathbf{a}^H_{ i_1,\cdots,i_M }\mathbf{z} = \mathcal{O}$, where $H$ is the Hermitian operator and $\mathbf{a}^H_{ i_1,\cdots,i_M }\in \mathbb{C}^{ \mathbb{I}_M \times J_1 \times \cdots \times J_N }$.  If we take any tensor $\mathbf{y} \in \mathfrak{R}(\mathcal{A}) $, then $\mathbf{y} = \sum\limits_{i_1,\cdots,i_M}
\mathbf{a}_{ i_1,\cdots,i_M }x_{i_1,\cdots,i_M}$, where $x_{i_1,\cdots,i_M} \in \mathbb{C}$.  Hence, 
\begin{eqnarray}\label{eq: row space orthgonal to right null}
\mathbf{y}^{H} \mathbf{z} &=&  (  \sum\limits_{i_1,\cdots,i_M}
\mathbf{a}_{ i_1,\cdots,i_M }x_{i_1,\cdots,i_M} )^{H} \mathbf{z} \nonumber \\
&=&  (  \sum\limits_{i_1,\cdots,i_M} x_{i_1,\cdots,i_M}
\mathbf{a}^{H}_{ i_1,\cdots,i_M } ) \mathbf{z} \nonumber \\
&=&   \sum\limits_{i_1,\cdots,i_M} x_{i_1,\cdots,i_M}
(\mathbf{a}^{H}_{ i_1,\cdots,i_M }  \mathbf{z}) = \mathcal{O}
\end{eqnarray} 
This shows that row space is orthogonal to the right null space. 

Following this right null space approach, we also can define the left null space as 
\begin{eqnarray}\label{eq: left null space def}
\mathfrak{N}_L(\mathcal{A}) &\define& \left\{ \mathbf{z} \in  \mathbb{C}^{
I_1 \times \cdots \times I_M \times \mathbb{I}_{N} }:  \mathbf{z}^{H}  \star_M \mathcal{A} = \mathcal{O} \right\} 
\end{eqnarray} 
Then the column space of $\mathcal{A}$ is defined as 
\begin{eqnarray}\label{eq: column space def}
\mathfrak{C}(\mathcal{A}) &\define& \bigg \{ \mathbf{y} \in  \mathbb{C}^{
I_1 \times \cdots \times I_M \times \mathbb{I}_{N} }: \mathbf{y} = \sum\limits_{j_1,\cdots,j_N}
\mathbf{a}_{ j_1,\cdots,j_N }x_{j_1,\cdots,j_N},  \nonumber \\
& & \mbox{where $x_{j_1,\cdots,j_N} \in \mathbb{C}$
and $\mathbf{a}_{ j_1,\cdots,j_N }  \in  \mathbb{C}^{
I_1 \times \cdots \times I_M \times \mathbb{I}_{N} }$.} \bigg \} 
\end{eqnarray} 
From the definition of left null space we have $\mathbf{z}^{H}\mathbf{a}_{ j_1,\cdots,j_N } = \mathcal{O}$, where $H$ is the Hermitian operator and $\mathbf{z}^{H} \in \mathbb{C}^{\mathbb{I}_{N}  \times  I_1 \times \cdots \times I_M }$. By taking any tensor $\mathbf{y} \in \mathfrak{C}(\mathcal{A}) $, then $\mathbf{y} = \sum\limits_{j_1,\cdots,j_N}
\mathbf{a}_{ j_1,\cdots, j_N }x_{j_1,\cdots,j_N}$, where $x_{j_1,\cdots,j_N} \in \mathbb{C}$.  We have, 
\begin{eqnarray}\label{eq: column space orthgonal to left null}
\mathbf{z}^{H}\mathbf{y}  &=&  \mathbf{z}^{H}(  \sum\limits_{j_1,\cdots,j_N}
\mathbf{a}_{ j_1,\cdots,j_N }x_{j_1,\cdots,j_N} )  \nonumber \\
&=&  (   \sum\limits_{j_1,\cdots,j_N} \mathbf{z}^{H}
\mathbf{a}_{ j_1,\cdots,j_N } x_{j_1,\cdots,j_N} ) \nonumber \\
&=&   \sum\limits_{ j_1,\cdots,j_N} x_{i_1,\cdots,i_M}
(\mathbf{z}^{T}  \mathbf{a}_{  j_1,\cdots,j_N } ) = \mathcal{O}
\end{eqnarray} 
This shows that column space is orthogonal to the left null space. 

Given following tensor relation:
\begin{eqnarray}\label{eq: S tensor sum as A B}
\mathcal{S}&=& \mathcal{A} +  \mathcal{U} \mathcal{B} \mathcal{V},  
\end{eqnarray}
the goal is to expresse the Moore-Penrose inverse of $\mathcal{S}$ in terms 
of tensors related to $\mathcal{A}, \mathcal{U}, \mathcal{B}, \mathcal{V}$. From
the definition of column space, we can decompose the tensor $\mathcal{U}$ into $\mathcal{X}_1 + \mathcal{Y}_1$, wherer the column-tensors of $\mathcal{X}_1$ are contained in the clumn space of $\mathcal{A}$, denoted as $\mathfrak{C}(\mathcal{A})$, and the column-tensors of $\mathcal{Y}_1$ are contained in the left null space of $\mathcal{A}$. Correspondingly, we also can decompose the tensor $\mathcal{V}^{H}$ into $\mathcal{X}_2 + \mathcal{Y}_2$, wherer the column-tensors of $\mathcal{X}_2$ are contained in the clumn space of $\mathcal{A}^{H}$, denoted as $\mathfrak{C}(\mathcal{A}^{H})$, and the column-tensors of $\mathcal{Y}_2$ are contained in the left null space of $\mathcal{A}^{H}$. Define tensors $\mathcal{E}_i$ as $\mathcal{E}_i \define \mathcal{Y}_i ( \mathcal{Y}^{H}_i \mathcal{Y}_i )^{\dagger}$ for $i =1, 2$. We are ready to present the following theorem about the identity for tensors with Moore-Penrose inverse.

\begin{theorem}[Sherman–Morrison–Woodbury identity for Moore-Penrose inverse]
\label{thm: Identity for Tensors with Moore-Penrose Inverse}
Given tensors $\mathcal{A} \in  \mathbb{C}^{I_1 \times \cdots \times I_M\times I_1 \times \cdots \times I_N}$, $\mathcal{B} \in  \mathbb{C}^{I_1 \times \cdots \times I_K\times I_1 \times \cdots \times I_K}$, \nonumber \\
$\mathcal{U} \in  \mathbb{C}^{I_1 \times \cdots \times I_M\times I_1 \times \cdots \times I_K}$ and $\mathcal{V} \in  \mathbb{C}^{I_1 \times \cdots \times I_K \times I_1 \times \cdots \times I_N}$, if following conditions are satisfied:
\begin{enumerate}\label{enu: MP inverse conditions}
  \item $\mathcal{U} = \mathcal{X}_1 + \mathcal{Y}_1$, where $\mathcal{X}_1 \in \mathfrak{C}(\mathcal{A})$ and $\mathcal{Y}_1$ is orthgonal to $\mathfrak{C}(\mathcal{A})$;
  \item $\mathcal{V}^{H} = \mathcal{X}_2 + \mathcal{Y}_2$, where $\mathcal{X}_2 \in \mathfrak{C}(\mathcal{A}^{H})$ and $\mathcal{Y}_2$ is orthgonal to $\mathfrak{C}(\mathcal{A}^{H})$;
 \item (1) $\mathcal{E}_2 \mathcal{B}^{\dagger} \mathcal{E}_1^{H}\mathcal{Y}_1\mathcal{B} = \mathcal{E}_2$, (2) $\mathcal{X}_1 \mathcal{E}_1^{H} \mathcal{Y}_1\mathcal{B} = \mathcal{X}_1\mathcal{B}$, (3) $\mathcal{Y}_1 \mathcal{E}^{H}_1 \mathcal{Y}_1 =  \mathcal{Y}_1$;
 \item (1) $\mathcal{B} \mathcal{Y}_2^{H} \mathcal{E}_2\mathcal{B}^{\dagger}\mathcal{E}_1^{H} = \mathcal{E}_1^{H}$, (2)  $\mathcal{B} \mathcal{Y}_2^{H} \mathcal{E}_2\mathcal{X}_2^{H} = \mathcal{B}\mathcal{X}_2^{H}$, (3) $\mathcal{E}_2 \mathcal{Y}^{H}_2 \mathcal{E}_2=  \mathcal{E}_2 $.
\end{enumerate}
Then the tensor  
%
\begin{eqnarray}\label{eq: S tensors}
\mathcal{S} &=& \mathcal{A} + \mathcal{U} \star_K \mathcal{B} \star_K \mathcal{V} \nonumber \\
&=& \mathcal{A} + (\mathcal{X}_1 + \mathcal{Y}_1 ) \star_K \mathcal{B} \star_K (\mathcal{X}_2 + \mathcal{Y}_2 )^H, 
\end{eqnarray}
has the following Moore-Penrose generalized inverse identiy:
\begin{eqnarray}\label{eq: Sherman Morrison Woodbury identity for Moore-Penrose inverse tensors}
\mathcal{S}^{\dagger} &=& \mathcal{A}^{\dagger} - \mathcal{E}_2 \star_K \mathcal{X}^{H}_2 \star_N \mathcal{A}^{\dagger} - \mathcal{A}^{\dagger} \star_M \mathcal{X}_1 \star_K \mathcal{E}_1^{H} \nonumber \\
& &+\mathcal{E}_2 \star_K (\mathcal{B}^{\dagger} + \mathcal{X}^{H}_2 \star_N \mathcal{A}^{\dagger} \star_M \mathcal{X}_1 ) \star_K \mathcal{E}^{H}_1,
\end{eqnarray}
where $\mathcal{E}_i \define \mathcal{Y}_i ( \mathcal{Y}^{H}_i \mathcal{Y}_i )^{\dagger}$ for $i =1, 2$. 

\end{theorem}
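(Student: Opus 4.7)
The plan is to verify the four Moore--Penrose equations in Definition~\ref{def: Moore-Penrose Inverse} for the candidate $\mathcal{S}^{\dagger}$ by direct substitution, using the decomposition $\mathcal{U} = \mathcal{X}_1+\mathcal{Y}_1$, $\mathcal{V}^{H} = \mathcal{X}_2+\mathcal{Y}_2$ to rewrite $\mathcal{S}$ as
\begin{eqnarray*}
\mathcal{S} \;=\; \mathcal{A} + (\mathcal{X}_1+\mathcal{Y}_1)\,\mathcal{B}\,(\mathcal{X}_2+\mathcal{Y}_2)^{H},
\end{eqnarray*}
and then expanding the Einstein products $\mathcal{S}\star_{N}\mathcal{S}^{\dagger}$, $\mathcal{S}^{\dagger}\star_{M}\mathcal{S}$, $\mathcal{S}\star_{N}\mathcal{S}^{\dagger}\star_{M}\mathcal{S}$, and $\mathcal{S}^{\dagger}\star_{M}\mathcal{S}\star_{N}\mathcal{S}^{\dagger}$ term by term. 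Throughout, I will suppress the $\star$ symbols, following the convention used in Theorem~\ref{thm:InvertibleWoodbury}.

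First I would record the orthogonality facts that follow from hypotheses (1)--(2) together with the Penrose identities for $\mathcal{A}^{\dagger}$. Since $\mathcal{A}\mathcal{A}^{\dagger}$ is the Hermitian orthogonal projector onto $\mathfrak{C}(\mathcal{A})$, the inclusion $\mathcal{X}_1\in\mathfrak{C}(\mathcal{A})$ gives $\mathcal{A}\mathcal{A}^{\dagger}\mathcal{X}_1=\mathcal{X}_1$, while the orthogonality of $\mathcal{Y}_1$ to $\mathfrak{C}(\mathcal{A})$ gives $\mathcal{A}^{H}\mathcal{Y}_1=\mathcal{O}$ and hence $\mathcal{A}^{\dagger}\mathcal{Y}_1 = \mathcal{A}^{\dagger}\mathcal{A}\mathcal{A}^{\dagger}\mathcal{Y}_1 = \mathcal{O}$ after using $(\mathcal{A}\mathcal{A}^{\dagger})^{H}=\mathcal{A}\mathcal{A}^{\dagger}$. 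The analogous relations $\mathcal{X}_2^{H}\mathcal{A}^{\dagger}\mathcal{A}=\mathcal{X}_2^{H}$ and $\mathcal{Y}_2^{H}\mathcal{A}^{\dagger}=\mathcal{O}$ come from hypothesis (2). I will also need the projector identities $\mathcal{Y}_i^{H}\mathcal{E}_i = \mathcal{Y}_i^{H}\mathcal{Y}_i(\mathcal{Y}_i^{H}\mathcal{Y}_i)^{\dagger}$ and $\mathcal{E}_i^{H}\mathcal{Y}_i = (\mathcal{Y}_i^{H}\mathcal{Y}_i)^{\dagger}\mathcal{Y}_i^{H}\mathcal{Y}_i$, which follow directly from the definition $\mathcal{E}_i = \mathcal{Y}_i(\mathcal{Y}_i^{H}\mathcal{Y}_i)^{\dagger}$ and the Penrose axioms applied to $\mathcal{Y}_i^{H}\mathcal{Y}_i$.

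With these in hand, I would expand $\mathcal{S}\,\mathcal{S}^{\dagger}$. Many cross terms vanish: products of the form $\mathcal{A}^{\dagger}\mathcal{Y}_1$ or $\mathcal{Y}_2^{H}\mathcal{A}^{\dagger}$ kill interactions between $\mathcal{A}$-terms in $\mathcal{S}$ and the $\mathcal{E}_i$-terms in $\mathcal{S}^{\dagger}$, while the identities $\mathcal{A}\mathcal{A}^{\dagger}\mathcal{X}_1=\mathcal{X}_1$ and $\mathcal{X}_2^{H}\mathcal{A}^{\dagger}\mathcal{A}=\mathcal{X}_2^{H}$ collapse the $\mathcal{X}_i$ terms. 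What survives is expected to telescope into an orthogonal projector whose Hermitian symmetry gives condition (3) for free. Hypothesis (4) enters precisely at the step where the surviving $\mathcal{B}$-weighted cross terms, of shape $\mathcal{B}\mathcal{Y}_2^{H}\mathcal{E}_2(\ldots)$, need to be reduced; the three sub-identities in condition (4) are exactly what allows those reductions. Condition (3) plays the symmetric role when analysing $\mathcal{S}^{\dagger}\mathcal{S}$ and verifying Penrose axiom (4) of the Moore--Penrose definition.

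Once $\mathcal{S}\mathcal{S}^{\dagger}$ and $\mathcal{S}^{\dagger}\mathcal{S}$ are shown to be Hermitian, I would establish $\mathcal{S}\mathcal{S}^{\dagger}\mathcal{S}=\mathcal{S}$ by right-multiplying the computed form of $\mathcal{S}\mathcal{S}^{\dagger}$ by $\mathcal{S}$, again using the same four families of simplifications, and then $\mathcal{S}^{\dagger}\mathcal{S}\mathcal{S}^{\dagger}=\mathcal{S}^{\dagger}$ symmetrically. The main obstacle I anticipate is the bookkeeping in the expansion: unlike the invertible case in Theorem~\ref{thm:InvertibleWoodbury}, here we cannot freely cancel $\mathcal{B}^{-1}$ with $\mathcal{B}$, and the factors $(\mathcal{Y}_i^{H}\mathcal{Y}_i)^{\dagger}$ inside $\mathcal{E}_i$ are not invertible in general. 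Hypotheses (3) and (4) are precisely the algebraic substitutes for what invertibility provided before, so the crux of the proof is to identify, in each of the four verifications, exactly which sub-identity of hypothesis (3) or (4) absorbs which residual cross term. Once this matching is laid out cleanly, the remaining steps are routine Einstein-product algebra.
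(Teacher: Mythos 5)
Your proposal is correct and follows essentially the same route as the paper: verify the four Penrose axioms by direct expansion, use the projector consequences of hypotheses (1)--(2) (namely $\mathcal{A}\mathcal{A}^{\dagger}\mathcal{X}_1=\mathcal{X}_1$, $\mathcal{A}^{\dagger}\mathcal{Y}_1=\mathcal{O}$, $\mathcal{Y}_2^{H}\mathcal{A}^{\dagger}=\mathcal{O}$, etc.) to kill cross terms, invoke hypotheses (3)--(4) to absorb the remaining $\mathcal{B}$-weighted terms, and reuse the collapsed forms $\mathcal{S}\mathcal{S}^{\dagger}=\mathcal{A}\mathcal{A}^{\dagger}+\mathcal{Y}_1\mathcal{E}_1^{H}$ and $\mathcal{S}^{\dagger}\mathcal{S}=\mathcal{A}^{\dagger}\mathcal{A}+\mathcal{E}_2\mathcal{Y}_2^{H}$ to check the two product axioms. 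The only difference is that the paper carries the term-by-term expansion out explicitly, while you defer it as routine; the structure and the key identities you identify match the paper's proof exactly.
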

\textbf{Proof: }
From the definition~\ref{def: Moore-Penrose Inverse}, the identity is established by direct computation (To save space, we also omit $\star$ product symbol between two tensors in this proof) to verify following four rules: 
\begin{eqnarray}\label{eq:proof in MP identity 1}
(1) \mathcal{S} \mathcal{S}^{\dagger} \mathcal{S} = \mathcal{S}, ~~~~~~~~
(2) \mathcal{S}^{\dagger} \mathcal{S} \mathcal{S}^{\dagger} = \mathcal{S}^{\dagger}, ~~~ \nonumber \\
(3) (\mathcal{S} \mathcal{S}^{\dagger})^{H} = \mathcal{S} \mathcal{S}^{\dagger}, ~~
(4) (\mathcal{S}^{\dagger} \mathcal{S})^{H} = \mathcal{S}^{\dagger}\mathcal{S}.
\end{eqnarray}

\textbf{Verify : $(\mathcal{S} \mathcal{S}^{\dagger})^{H} = \mathcal{S} \mathcal{S}^{\dagger}$}

By expansion of $\mathcal{S}\mathcal{S}^{\dagger}$, we have 
\begin{eqnarray}\label{eq:proof in MP identity 2}
\mathcal{S}\mathcal{S}^{\dagger} &=&  \mathcal{A}\mathcal{A}^{\dagger} - \mathcal{A}\mathcal{E}_2 \mathcal{X}^{H}_2 \mathcal{A}^{\dagger} -   \mathcal{A}\mathcal{A}^{\dagger} \mathcal{X}_1 \mathcal{E}_1^{H} + \mathcal{A}\mathcal{E}_2( \mathcal{B}^{\dagger} + \mathcal{X}_2^{H} \mathcal{A}^{\dagger} \mathcal{X}_1     )\mathcal{E}^{H}_1 +  \nonumber \\
& & 
(\mathcal{X}_1 + \mathcal{Y}_1) \mathcal{B} (\mathcal{X}_2 + \mathcal{Y}_2)^{H} \mathcal{A}^{\dagger} -  (\mathcal{X}_1 + \mathcal{Y}_1) \mathcal{B} (\mathcal{X}_2 + \mathcal{Y}_2)^{H}\mathcal{E}_2 \mathcal{X}_2^{H} \mathcal{A}^{\dagger} - \nonumber \\
& &  (\mathcal{X}_1 + \mathcal{Y}_1) \mathcal{B} (\mathcal{X}_2 + \mathcal{Y}_2)^{H}\mathcal{A}^{\dagger}  \mathcal{X}_1 \mathcal{E}_1^{H}  +  (\mathcal{X}_1 + \mathcal{Y}_1) \mathcal{B} (\mathcal{X}_2 + \mathcal{Y}_2)^{H}\mathcal{E}_2 \mathcal{X}_2^{H} \mathcal{A}^{\dagger}  \mathcal{X}_1 \mathcal{E}_1^{H}  + \nonumber \\
& &  (\mathcal{X}_1 + \mathcal{Y}_1) \mathcal{B} (\mathcal{X}_2 + \mathcal{Y}_2)^{H}
\mathcal{E}_2 \mathcal{B}^{\dagger} \mathcal{E}_1^{H},
\end{eqnarray}
and, since column-tensors of $\mathcal{Y}_2$ are orthgonal to $\mathfrak{C}(\mathcal{A}^{H})$, we also have $\mathcal{A}\mathcal{Y}_2 = \mathcal{O}$, $\mathcal{Y}^{H}_2\mathcal{A}^{\dagger} = \mathcal{O}$, and $\mathcal{X}_2^{H}\mathcal{Y}_2 = \mathcal{O}$. From these relations, the Eq.~\eqref{eq:proof in MP identity 2} can be simplfies as 
\begin{eqnarray}\label{eq:proof in MP identity 3}
\mathcal{S}\mathcal{S}^{\dagger} &=&  \mathcal{A}\mathcal{A}^{\dagger} -   \mathcal{A}\mathcal{A}^{\dagger} \mathcal{X}_1 \mathcal{E}_1^{H} +  \nonumber \\
& & 
(\mathcal{X}_1 + \mathcal{Y}_1) \mathcal{B} \mathcal{X}_2 ^{H} \mathcal{A}^{\dagger} -  (\mathcal{X}_1 + \mathcal{Y}_1) \mathcal{B} \mathcal{Y}_2^{H}\mathcal{E}_2 \mathcal{X}_2^{H} \mathcal{A}^{\dagger} - \nonumber \\
& &  (\mathcal{X}_1 + \mathcal{Y}_1) \mathcal{B} \mathcal{X}_2^{H}\mathcal{A}^{\dagger}  \mathcal{X}_1 \mathcal{E}_1^{H}  +  (\mathcal{X}_1 + \mathcal{Y}_1) \mathcal{B} \mathcal{Y}_2^{H}\mathcal{E}_2 \mathcal{X}_2^{H} \mathcal{A}^{\dagger}  \mathcal{X}_1 \mathcal{E}_1^{H}  + \nonumber \\
& &  (\mathcal{X}_1 + \mathcal{Y}_1) \mathcal{B} \mathcal{Y}_2^{H}
\mathcal{E}_2 \mathcal{B}^{\dagger} \mathcal{E}_1^{H},
\end{eqnarray}
and from the fourth condition at this Theorem~\ref{thm: Identity for Tensors with Moore-Penrose Inverse}, i.e., $\mathcal{B} \mathcal{Y}_2^{H} \mathcal{E}_2\mathcal{B}^{\dagger}\mathcal{E}_1^{H} = \mathcal{E}_1^{H}$, and  $\mathcal{B} \mathcal{Y}_2^{H} \mathcal{E}_2\mathcal{X}_2^{H} = \mathcal{B}\mathcal{X}_2^{H}$ and $\mathcal{A}\mathcal{A}^{\dagger} \mathcal{X}_1 = \mathcal{X}_1$, we can further simplifies the Eq.~\eqref{eq:proof in MP identity 3} as :
\begin{eqnarray}\label{eq:proof in MP identity 4}
\mathcal{S}\mathcal{S}^{\dagger} &=&  \mathcal{A}\mathcal{A}^{\dagger} +  \mathcal{Y}_1 \mathcal{E}_1^{H}.
\end{eqnarray}
Then, 
\begin{eqnarray}\label{eq:proof in MP identity 5}
(\mathcal{S}\mathcal{S}^{\dagger})^H &=&  (\mathcal{A}\mathcal{A}^{\dagger} +  \mathcal{Y}_1 \mathcal{E}_1^{H})^H \nonumber \\
&=&
 (\mathcal{A}\mathcal{A}^{\dagger})^{H} +  (\mathcal{Y}_1 \mathcal{E}_1^{H})^H \nonumber \\
&=&
 \mathcal{A}\mathcal{A}^{\dagger} +  \mathcal{Y}_1 \mathcal{E}_1^{H} = \mathcal{S}\mathcal{S}^{\dagger}
\end{eqnarray}
the third requirement of the definition~\ref{def: Moore-Penrose Inverse} is valid from the Eq.~\eqref{eq:proof in MP identity 5}.

\textbf{Verify : $ (\mathcal{S}^{\dagger} \mathcal{S})^{H} = \mathcal{S}^{\dagger}\mathcal{S}$}

By expansion $\mathcal{S}^{\dagger} \mathcal{S}$ as the Eq.~\eqref{eq:proof in MP identity 2}, we can simplifies such expansion with following relations $\mathcal{A}^{\dagger} \mathcal{Y}_1 = \mathcal{O}$, $\mathcal{Y}^{H}_1\mathcal{A}  = \mathcal{O}$ and $\mathcal{X}^{H}_1 \mathcal{Y}_1 = \mathcal{O}$ due to column-tensors of $\mathcal{Y}_1$ are orthgonal to $\mathfrak{C}(\mathcal{A})$, $\mathcal{X}_2 \mathcal{A}^{\dagger} \mathcal{A} = \mathcal{X}_2$ (from the definition of Moore-Penrose inverse of $\mathcal{A}$), and the third condition at this Theorem~\ref{thm: Identity for Tensors with Moore-Penrose Inverse}, i.e., $\mathcal{E}_2 \mathcal{B}^{\dagger} \mathcal{E}_1^{H}\mathcal{Y}_1\mathcal{B} = \mathcal{E}_2$, and $\mathcal{X}_1 \mathcal{E}_1^{H} \mathcal{Y}_1\mathcal{B} = \mathcal{X}_1\mathcal{B}$, then we have 
\begin{eqnarray}\label{eq:proof in MP identity 6}
\mathcal{S}^{\dagger}\mathcal{S} &=&  \mathcal{A}^{\dagger} \mathcal{A} +  \mathcal{E}_2 \mathcal{Y}_2^{H}.
\end{eqnarray}
Hence, the fourth requirement of the definition~\ref{def: Moore-Penrose Inverse} is valid from the Eq.~\eqref{eq:proof in MP identity 6}.

\textbf{Verify : $\mathcal{S} \mathcal{S}^{\dagger} \mathcal{S} = \mathcal{S}$}

Since, we have 
\begin{eqnarray}\label{eq:proof in MP identity 7}
\mathcal{S} \mathcal{S}^{\dagger} \mathcal{S}&=& ( \mathcal{A}\mathcal{A}^{\dagger} +  \mathcal{Y}_1 \mathcal{E}_1^{H} ) (\mathcal{A} + (\mathcal{X}_1 + \mathcal{Y}_1 ) \mathcal{B}(\mathcal{X}_2 + \mathcal{Y}_2 )^H) \nonumber \\
&=& \mathcal{A} \mathcal{A}^{\dagger} \mathcal{A} + \mathcal{Y}_1\mathcal{E}^{H}_1\mathcal{A} + \mathcal{A} \mathcal{A}^{\dagger}  (\mathcal{X}_1 + \mathcal{Y}_1 ) \mathcal{B}(\mathcal{X}_2 + \mathcal{Y}_2 )^H + \nonumber \\
&  &\mathcal{Y}_1\mathcal{E}^{H}_1 (\mathcal{X}_1 + \mathcal{Y}_1 ) \mathcal{B}(\mathcal{X}_2 + \mathcal{Y}_2 )^H \nonumber \\
&=& \mathcal{A} +  \mathcal{Y}_1 \mathcal{E}^{H}_1\mathcal{A}+\mathcal{A} \mathcal{A}^{\dagger} \mathcal{X}_1\mathcal{B}(\mathcal{X}_2 + \mathcal{Y}_2 )^H + 
\mathcal{A} \mathcal{A}^{\dagger} \mathcal{Y}_1\mathcal{B}(\mathcal{X}_2 + \mathcal{Y}_2 )^H +\nonumber \\
& &  \mathcal{Y}_1 \mathcal{E}^{H}_1 \mathcal{X}_1\mathcal{B}(\mathcal{X}_2 + \mathcal{Y}_2 )^H + \mathcal{Y}_1 \mathcal{E}^{H}_1 \mathcal{Y}_1\mathcal{B}(\mathcal{X}_2 + \mathcal{Y}_2 )^H  \nonumber \\
&\stackrel{1}{=}&\mathcal{A} + \mathcal{X}_1\mathcal{B}(\mathcal{X}_2 + \mathcal{Y}_2 )^{H} +  \mathcal{Y}_1\mathcal{B}(\mathcal{X}_2 + \mathcal{Y}_2 )^{H} \nonumber \\
&=& \mathcal{A} + (\mathcal{X}_1 + \mathcal{Y}_1 )\mathcal{B}(\mathcal{X}_2 + \mathcal{Y}_2 )^{H}
=  \mathcal{S}
\end{eqnarray}
where we apply $\mathcal{A}\mathcal{A}^{\dagger}\mathcal{X}_1 = \mathcal{X}_1$,  $\mathcal{A}^{\dagger} \mathcal{Y}_1 = \mathcal{O}$, $\mathcal{Y}^{H}_1\mathcal{A}  = \mathcal{O}$, $\mathcal{Y}^{H}_1 \mathcal{X}_1 = \mathcal{O}$ and (3) of the third condition at this Theorem~\ref{thm: Identity for Tensors with Moore-Penrose Inverse}, i.e., $\mathcal{Y}_1 \mathcal{E}^{H}_1 \mathcal{Y}_1 =  \mathcal{Y}_1$ at the equality $\stackrel{1}{=}$ in simplification. 

\textbf{Verify : $\mathcal{S}^{\dagger} \mathcal{S} \mathcal{S}^{\dagger} = \mathcal{S}^{\dagger}$}

Since, we have 
\begin{eqnarray}\label{eq:proof in MP identity 8}
\mathcal{S}^{\dagger}  \mathcal{S}\mathcal{S}^{\dagger} &=& ( \mathcal{A}^{\dagger}\mathcal{A} +  \mathcal{E}_2 \mathcal{Y}_2^{H} ) ( \mathcal{A}^{\dagger} - \mathcal{E}_2 \mathcal{X}^{H}_2  \mathcal{A}^{\dagger} - \mathcal{A}^{\dagger}  \mathcal{X}_1 \mathcal{E}_1^{H} +\mathcal{E}_2 (\mathcal{B}^{\dagger} + \mathcal{X}^{H}_2 \mathcal{A}^{\dagger} \mathcal{X}_1 ) \mathcal{E}^{H}_1 ) \nonumber \\
&=&  \mathcal{A}^{\dagger}\mathcal{A} \mathcal{A}^{\dagger} + \mathcal{E}_2\mathcal{Y}_2^{H}\mathcal{A}^{\dagger} - \mathcal{A}^{\dagger}\mathcal{A}\mathcal{E}_2\mathcal{X}^{H}\mathcal{A}^{H} - \mathcal{E}_2 \mathcal{Y}_2^{H}  \mathcal{E}_2 \mathcal{X}^{H}\mathcal{A}^{H} - \mathcal{A}^{\dagger}\mathcal{A} \mathcal{A}^{\dagger}\mathcal{X}_1\mathcal{E}_1^{H} -
\nonumber \\
& &  \mathcal{E}\mathcal{Y}^{H}_2\mathcal{A}^{\dagger} \mathcal{X}_1\mathcal{E}_1^{H} + 
  \mathcal{A}^{\dagger}\mathcal{A}\mathcal{E}_2  (\mathcal{B}^{\dagger} + \mathcal{X}^{H}_2 \mathcal{A}^{\dagger} \mathcal{X}_1 ) \mathcal{E}^{H}_1 + \mathcal{E}_2\mathcal{Y}^{H}_2\mathcal{E}_2  (\mathcal{B}^{\dagger} + \mathcal{X}^{H}_2 \mathcal{A}^{\dagger} \mathcal{X}_1 ) \mathcal{E}^{H}_1 \nonumber \\
&\stackrel{2}{=}& \mathcal{A}^{\dagger} - \mathcal{E}_2 \mathcal{X}^{H}_2  \mathcal{A}^{\dagger} - \mathcal{A}^{\dagger}  \mathcal{X}_1 \mathcal{E}_1^{H} +\mathcal{E}_2 (\mathcal{B}^{\dagger} + \mathcal{X}^{H}_2 \mathcal{A}^{\dagger} \mathcal{X}_1 ) \mathcal{E}^{H}_1 = \mathcal{S}^{\dagger}
\end{eqnarray}
where we apply $\mathcal{A}\mathcal{A}^{\dagger}\mathcal{X}_1 = \mathcal{X}_1$,  $\mathcal{A}\mathcal{Y}_2 = \mathcal{O}$, $\mathcal{Y}^{H}_2\mathcal{A}^{\dagger} = \mathcal{O}$  and (3) of the fourth condition at this Theorem~\ref{thm: Identity for Tensors with Moore-Penrose Inverse}, i.e., $\mathcal{E}_2 \mathcal{Y}^{H}_2 \mathcal{E}_2 =  \mathcal{E}_2$ at the equality $\stackrel{2}{=}$ in simplification. 

Finally, since all requirements for Moore-Penrose generlized inverse definition have been fulfilled, this main theorem is proved.    $\hfill \Box$

There are several corollaries can be extended based on Theorem~\ref{thm: Identity for Tensors with Moore-Penrose Inverse}. If tensors $\mathcal{B}$ and $(\mathcal{Y}^{H}_i \mathcal{Y}_i )$ for $i =1, 2$ are invertible, those requirements in Theorem~\ref{thm: Identity for Tensors with Moore-Penrose Inverse} can be reduced. Then, we have following corollary based on the Theorem~\ref{thm: Identity for Tensors with Moore-Penrose Inverse}.

\begin{corollary}\label{thm: Identity for Tensors with Moore-Penrose Inverse YY invertible}
Given tensors $\mathcal{A} \in  \mathbb{C}^{I_1 \times \cdots \times I_M\times I_1 \times \cdots \times I_N}$, and the invertible tensor $\mathcal{B} \in  \mathbb{C}^{I_1 \times \cdots \times I_K\times I_1 \times \cdots \times I_K}$, $\mathcal{U} \in  \mathbb{C}^{I_1 \times \cdots \times I_M\times I_1 \times \cdots \times I_K}$ and $\mathcal{V} \in  \mathbb{C}^{I_1 \times \cdots \times I_K \times I_1 \times \cdots \times I_N}$, if following conditions are valid :
\begin{enumerate}\label{enu: MP inverse conditions  YY invertible}
  \item $\mathcal{U} = \mathcal{X}_1 + \mathcal{Y}_1$, where $\mathcal{X}_1 \in \mathfrak{C}(\mathcal{A})$ and $\mathcal{Y}_1$ is orthgonal to $\mathfrak{C}(\mathcal{A})$;
  \item $\mathcal{V}^{H} = \mathcal{X}_2 + \mathcal{Y}_2$, where $\mathcal{X}_2 \in \mathfrak{C}(\mathcal{A}^{H})$ and $\mathcal{Y}_2$ is orthgonal to $\mathfrak{C}(\mathcal{A}^{H})$;
 \item tensors $(\mathcal{Y}^{H}_i \mathcal{Y}_i )$ for $i=1, 2$ are invertible.
\end{enumerate}
Then the tensor  
%
\begin{eqnarray}\label{eq: S tensors YY invertible}
\mathcal{S} &=& \mathcal{A} + \mathcal{U} \star_K \mathcal{B} \star_K \mathcal{V} \nonumber \\
&=& \mathcal{A} + (\mathcal{X}_1 + \mathcal{Y}_1 ) \star_K \mathcal{B} \star_K (\mathcal{X}_2 + \mathcal{Y}_2 )^H, 
\end{eqnarray}
has the following Moore-Penrose generalized inverse identiy:
\begin{eqnarray}\label{eq: Sherman Morrison Woodbury identity for Moore-Penrose inverse tensors YY invertible}
\mathcal{S}^{\dagger} &=& \mathcal{A}^{\dagger} - \mathcal{E}_2 \star_K \mathcal{X}^{H}_2 \star_N \mathcal{A}^{\dagger} - \mathcal{A}^{\dagger} \star_M \mathcal{X}_1 \star_K \mathcal{E}_1^{H} \nonumber \\
& &+\mathcal{E}_2 \star_K (\mathcal{B}^{\dagger} + \mathcal{X}^{H}_2 \star_N \mathcal{A}^{\dagger} \star_M \mathcal{X}_1 ) \star_K \mathcal{E}^{H}_1,
\end{eqnarray}
where $\mathcal{E}_i \define \mathcal{Y}_i ( \mathcal{Y}^{H}_i \mathcal{Y}_i )^{-1}$ for $i =1, 2$. 
\end{corollary}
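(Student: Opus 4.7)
The plan is to derive this corollary as a direct specialization of Theorem~\ref{thm: Identity for Tensors with Moore-Penrose Inverse}: under the stronger invertibility hypotheses, the six algebraic identities labeled (3)(1)--(3) and (4)(1)--(3) in that theorem all collapse to trivialities, so no new computation of $\mathcal{S}^\dagger$ is required. The target formula for $\mathcal{S}^\dagger$ is literally identical to the one in Theorem~\ref{thm: Identity for Tensors with Moore-Penrose Inverse}; the only change is that $(\mathcal{Y}_i^H \mathcal{Y}_i)^\dagger$ becomes $(\mathcal{Y}_i^H \mathcal{Y}_i)^{-1}$, which is immediate from Definition~\ref{def: Moore-Penrose Inverse} since the Moore--Penrose inverse coincides with the ordinary inverse whenever the latter exists.

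First I would record two elementary reductions. Since $\mathcal{B}$ is invertible, Definition~\ref{def: Moore-Penrose Inverse} yields $\mathcal{B}^\dagger = \mathcal{B}^{-1}$; substituting this in the formula (\ref{eq: Sherman Morrison Woodbury identity for Moore-Penrose inverse tensors YY invertible}) explains the appearance of $\mathcal{B}^{-1}$ implicitly in the proof (the statement keeps the $\dagger$ notation). Similarly, with $\mathcal{Y}_i^H \star_N \mathcal{Y}_i$ invertible, $\mathcal{E}_i = \mathcal{Y}_i \star_K (\mathcal{Y}_i^H \star_N \mathcal{Y}_i)^{-1}$ is well defined. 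The key computational fact I would then isolate as a one-line lemma is the pair of identities
\begin{eqnarray}
\mathcal{E}_i^H \star_N \mathcal{Y}_i \;=\; \mathcal{I}, \qquad \mathcal{Y}_i^H \star_N \mathcal{E}_i \;=\; \mathcal{I},
\end{eqnarray}
for $i=1,2$, which follow by distributing the Hermitian transpose, using $(\mathcal{Y}_i^H \mathcal{Y}_i)^H = \mathcal{Y}_i^H \mathcal{Y}_i$, and cancelling with the inverse.

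Next I would run through conditions (3) and (4) of Theorem~\ref{thm: Identity for Tensors with Moore-Penrose Inverse} one at a time, in each case inserting the identity $\mathcal{E}_i^H \mathcal{Y}_i = \mathcal{I}$ (or $\mathcal{Y}_i^H \mathcal{E}_i = \mathcal{I}$) and replacing $\mathcal{B}^\dagger$ by $\mathcal{B}^{-1}$. For condition (3): the factor $\mathcal{E}_1^H \mathcal{Y}_1$ that appears in (3)(1) and (3)(2) becomes the identity, reducing them to $\mathcal{E}_2 \mathcal{B}^{-1} \mathcal{B} = \mathcal{E}_2$ and $\mathcal{X}_1 \mathcal{B} = \mathcal{X}_1 \mathcal{B}$ respectively, while (3)(3) is immediate from $\mathcal{E}_1^H \mathcal{Y}_1 = \mathcal{I}$. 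For condition (4): the factor $\mathcal{Y}_2^H \mathcal{E}_2$ in (4)(1) and (4)(2) similarly collapses, giving $\mathcal{B}\mathcal{B}^{-1}\mathcal{E}_1^H = \mathcal{E}_1^H$ and $\mathcal{B}\mathcal{X}_2^H = \mathcal{B}\mathcal{X}_2^H$, and (4)(3) expands as $\mathcal{E}_2 \mathcal{Y}_2^H \mathcal{E}_2 = \mathcal{Y}_2(\mathcal{Y}_2^H\mathcal{Y}_2)^{-1}\mathcal{Y}_2^H \mathcal{Y}_2 (\mathcal{Y}_2^H\mathcal{Y}_2)^{-1} = \mathcal{Y}_2(\mathcal{Y}_2^H\mathcal{Y}_2)^{-1} = \mathcal{E}_2$.

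With all six identities verified, every hypothesis of Theorem~\ref{thm: Identity for Tensors with Moore-Penrose Inverse} is satisfied, and its conclusion is exactly (\ref{eq: Sherman Morrison Woodbury identity for Moore-Penrose inverse tensors YY invertible}). There is no substantive obstacle here: the only care needed is dimensional bookkeeping, to make sure that each ``$\mathcal{I}$'' resulting from $\mathcal{E}_i^H \mathcal{Y}_i$ really is the identity tensor compatible with the neighbouring Einstein products (the left-hand $\mathcal{I}$ lives in $\mathbb{C}^{I_1\times\cdots\times I_K\times I_1\times\cdots\times I_K}$, acting trivially under $\star_K$) so that the cancellations above are legitimate.
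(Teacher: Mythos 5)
Your proposal is correct and follows essentially the same route as the paper: the paper likewise verifies one of the six conditions explicitly (condition (3)(1), using $\mathcal{E}_1^H\mathcal{Y}_1 = (\mathcal{Y}_1^H\mathcal{Y}_1)^{-1}\mathcal{Y}_1^H\mathcal{Y}_1 = \mathcal{I}$ and $\mathcal{B}^\dagger=\mathcal{B}^{-1}$), asserts the remaining five ``by similar arguments,'' and then invokes Theorem~\ref{thm: Identity for Tensors with Moore-Penrose Inverse}. Your version is, if anything, slightly more complete, since you isolate the reusable identities $\mathcal{E}_i^H\mathcal{Y}_i=\mathcal{I}$ and $\mathcal{Y}_i^H\mathcal{E}_i=\mathcal{I}$ and check all six conditions individually.
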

\textbf{Proof:} 

Because $ (\mathcal{Y}^{H}_i \mathcal{Y}_i )$ for $i=1, 2$ are invertible, then 
\begin{eqnarray}
\mathcal{E}_2 \mathcal{B}^{\dagger} \mathcal{E}_1^{H}\mathcal{Y}_1\mathcal{B} = 
\mathcal{E}_2 \mathcal{B}^{-1} \left[ \mathcal{Y}_1 (\mathcal{Y}^{H}_1 \mathcal{Y}_1)^{-1} \right]^{H}\mathcal{Y}_1\mathcal{B} = \mathcal{E}_2.
\end{eqnarray}
By similar arguments, all following conditions are valid:
\begin{itemize}
\item $\mathcal{X}_1 \mathcal{E}_1^{H} \mathcal{Y}_1\mathcal{B} = \mathcal{X}_1\mathcal{B}$, 
\item $\mathcal{Y}_1 \mathcal{E}^{H}_1 \mathcal{Y}_1 =  \mathcal{Y}_1$,
\item $\mathcal{B} \mathcal{Y}_2^{H} \mathcal{E}_2\mathcal{B}^{-1}\mathcal{E}_1^{H} = \mathcal{E}_1^{H}$,
\item $\mathcal{B} \mathcal{Y}_2^{H} \mathcal{E}_2\mathcal{X}_2^{H} = \mathcal{B}\mathcal{X}_2^{H}$, 
\item $\mathcal{E}_2 \mathcal{Y}^{H}_2 \mathcal{E}_2=  \mathcal{E}_2 $.
\end{itemize}
Therefore, this corollary is proved from Theorem~\ref{thm: Identity for Tensors with Moore-Penrose Inverse} because all conditions required at Theorem~\ref{thm: Identity for Tensors with Moore-Penrose Inverse} are satisfied. $\hfill \Box$

When the column space projections of the tensors $\mathcal{U}$ and $\mathcal{V}$ are zero, i.e., $\mathcal{X}_1 = \mathcal{O}$ and $\mathcal{X}_2 = \mathcal{O}$. Theorem~\ref{thm: Identity for Tensors with Moore-Penrose Inverse} can be simplfied as following corollary. 

\begin{corollary}\label{cor: Identity for Tensors with Moore-Penrose Inverse X zero}
Given tensors $\mathcal{A} \in  \mathbb{C}^{I_1 \times \cdots \times I_M\times I_1 \times \cdots \times I_N}$, \\ $\mathcal{B} \in  \mathbb{C}^{I_1 \times \cdots \times I_K\times I_1 \times \cdots \times I_K}$, $\mathcal{U} \in  \mathbb{C}^{I_1 \times \cdots \times I_M\times I_1 \times \cdots \times I_K}$ and $\mathcal{V} \in  \mathbb{C}^{I_1 \times \cdots \times I_K \times I_1 \times \cdots \times I_N}$, if following conditions are valid :
\begin{enumerate}\label{enu: MP inverse conditions X zero}
  \item $\mathcal{U} = \mathcal{Y}_1$, where $\mathcal{Y}_1$ is orthgonal to $\mathfrak{C}(\mathcal{A})$;
  \item $\mathcal{V}^{H} =  \mathcal{Y}_2$, where $\mathcal{Y}_2$ is orthgonal to $\mathfrak{C}(\mathcal{A}^{H})$;
 \item (1) $\mathcal{E}_2 \mathcal{B}^{\dagger} \mathcal{E}_1^{H}\mathcal{Y}_1\mathcal{B} = \mathcal{E}_2$, (2) $\mathcal{Y}_1 \mathcal{E}^{H}_1 \mathcal{Y}_1 =  \mathcal{Y}_1$;
 \item (1) $\mathcal{B} \mathcal{Y}_2^{H} \mathcal{E}_2\mathcal{B}^{\dagger}\mathcal{E}_1^{H} = \mathcal{E}_1^{H}$, (2) $\mathcal{E}_2 \mathcal{Y}^{H}_2 \mathcal{E}_2=  \mathcal{E}_2 $.
\end{enumerate}
Then the tensor  
%
\begin{eqnarray}\label{eq: S tensors X zero}
\mathcal{S} &=& \mathcal{A} + \mathcal{U} \star_K \mathcal{B} \star_K \mathcal{V} \nonumber \\
&=& \mathcal{A} + \mathcal{Y}_1  \star_K \mathcal{B} \star_K \mathcal{Y}_2^H, 
\end{eqnarray}
has the following Moore-Penrose generalized inverse identiy:
\begin{eqnarray}\label{eq: Sherman Morrison Woodbury identity for Moore-Penrose inverse tensors X zero}
\mathcal{S}^{\dagger} &=& \mathcal{A}^{\dagger} + \mathcal{E}_2 \star_K \mathcal{B}^{\dagger}  \star_K \mathcal{E}^{H}_1,
\end{eqnarray}
where $\mathcal{E}_i \define \mathcal{Y}_i ( \mathcal{Y}^{H}_i \mathcal{Y}_i )^{\dagger}$ for $i =1, 2$.

\end{corollary}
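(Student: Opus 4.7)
The plan is to derive this corollary as a direct specialization of Theorem~\ref{thm: Identity for Tensors with Moore-Penrose Inverse}, by setting $\mathcal{X}_1 = \mathcal{O}$ and $\mathcal{X}_2 = \mathcal{O}$. Under the corollary's hypotheses $\mathcal{U} = \mathcal{Y}_1$ and $\mathcal{V}^H = \mathcal{Y}_2$, these decompositions trivially match the first two conditions of the parent theorem with vanishing $\mathfrak{C}(\mathcal{A})$- and $\mathfrak{C}(\mathcal{A}^H)$-components, so no further work is needed there.

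Next I would check that the four subconditions on $\mathcal{X}_i$ in items (3) and (4) of Theorem~\ref{thm: Identity for Tensors with Moore-Penrose Inverse} that are not assumed explicitly in the corollary become vacuous. Specifically, condition (3)(2), $\mathcal{X}_1 \mathcal{E}_1^H \mathcal{Y}_1 \mathcal{B} = \mathcal{X}_1 \mathcal{B}$, reduces to $\mathcal{O} = \mathcal{O}$, and condition (4)(2), $\mathcal{B}\mathcal{Y}_2^H \mathcal{E}_2 \mathcal{X}_2^H = \mathcal{B}\mathcal{X}_2^H$, similarly reduces to $\mathcal{O} = \mathcal{O}$. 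The remaining subconditions (3)(1), (3)(3), (4)(1), (4)(3) in the parent theorem coincide verbatim with items (3)(1)--(2) and (4)(1)--(2) of the corollary, so all hypotheses of Theorem~\ref{thm: Identity for Tensors with Moore-Penrose Inverse} hold.

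Finally, I would substitute $\mathcal{X}_1 = \mathcal{O}$ and $\mathcal{X}_2 = \mathcal{O}$ into the master formula~\eqref{eq: Sherman Morrison Woodbury identity for Moore-Penrose inverse tensors}. The two middle terms $\mathcal{E}_2 \star_K \mathcal{X}_2^H \star_N \mathcal{A}^{\dagger}$ and $\mathcal{A}^{\dagger} \star_M \mathcal{X}_1 \star_K \mathcal{E}_1^H$ vanish, and the bracket $\mathcal{B}^{\dagger} + \mathcal{X}_2^H \star_N \mathcal{A}^{\dagger} \star_M \mathcal{X}_1$ collapses to $\mathcal{B}^{\dagger}$, yielding exactly~\eqref{eq: Sherman Morrison Woodbury identity for Moore-Penrose inverse tensors X zero}. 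There is essentially no obstacle here: since Theorem~\ref{thm: Identity for Tensors with Moore-Penrose Inverse} has already been fully established, the only substantive bookkeeping is confirming that the $\mathcal{X}_i$-dependent subconditions degenerate trivially, which is immediate from $\mathcal{X}_1 = \mathcal{X}_2 = \mathcal{O}$.
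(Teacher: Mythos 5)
Your proposal is correct and matches the paper's own argument, which likewise obtains the corollary by setting $\mathcal{X}_1 = \mathcal{X}_2 = \mathcal{O}$ in Theorem~\ref{thm: Identity for Tensors with Moore-Penrose Inverse}. Your version is slightly more explicit in verifying that the $\mathcal{X}_i$-dependent hypotheses degenerate to trivial identities, but the route is the same.
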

\textbf{Proof: }

The proof can be obtained by replacing tensors $\mathcal{X}_1$ and $\mathcal{X}_2$ as zero tensor $\mathcal{O}$ in the proof of Theorem~\ref{thm: Identity for Tensors with Moore-Penrose Inverse}.  $\hfill \Box$

If the tensor $\mathcal{A}$ is a Hermitian tensor, we can have following corollary. 
\begin{corollary}
\label{thm: Identity for Tensors with Moore-Penrose Inverse Hermitian}
Given tensors $\mathcal{A} \in  \mathbb{C}^{I_1 \times \cdots \times I_M\times I_1 \times \cdots \times I_M}$, \\
$\mathcal{B} \in \mathbb{C}^{I_1 \times \cdots \times I_K\times I_1 \times \cdots \times I_K}$, $\mathcal{U} \in  \mathbb{C}^{I_1 \times \cdots \times I_M\times I_1 \times \cdots \times I_K}$ and $\mathcal{V} \in  \mathbb{C}^{I_1 \times \cdots \times I_K \times I_1 \times \cdots \times I_M}$, if following conditions are valid :
\begin{enumerate}\label{enu: MP inverse conditions Hermitian}
  \item $\mathcal{A}^{H} = \mathcal{A}$, and $\mathcal{U}^{H} = \mathcal{V}$.
  \item $\mathcal{U} = \mathcal{X} + \mathcal{Y}$, where $\mathcal{X} \in \mathfrak{C}(\mathcal{A})$ and $\mathcal{Y}$ is orthgonal to $\mathfrak{C}(\mathcal{A})$;
  \item (1) $\mathcal{E} \mathcal{B}^{\dagger} \mathcal{E}^{H}\mathcal{Y}\mathcal{B} = \mathcal{E}$, (2) $\mathcal{X} \mathcal{E}^{H} \mathcal{Y}\mathcal{B} = \mathcal{X}\mathcal{B}$, (3) $\mathcal{Y} \mathcal{E}^{H} \mathcal{Y} =  \mathcal{Y}$;
 \item (1) $\mathcal{B} \mathcal{Y}^{H} \mathcal{E}\mathcal{B}^{\dagger}\mathcal{E}^{H} = \mathcal{E}^{H}$, (2)  $\mathcal{B} \mathcal{Y}^{H} \mathcal{E}\mathcal{X}^{H} = \mathcal{B}\mathcal{X}^{H}$, (3) $\mathcal{E} \mathcal{Y}^{H} \mathcal{E} = \mathcal{E}$.
\end{enumerate}
Then the tensor  
\begin{eqnarray}\label{eq: S tensors Hermitian}
\mathcal{S} &=& \mathcal{A} + \mathcal{U} \star_K \mathcal{B} \star_K \mathcal{V} \nonumber \\
&=& \mathcal{A} + (\mathcal{X} + \mathcal{Y} ) \star_K \mathcal{B} \star_K (\mathcal{X} + \mathcal{Y} )^H, 
\end{eqnarray}
has the following Moore-Penrose generalized inverse identiy:
\begin{eqnarray}\label{eq: Sherman Morrison Woodbury identity for Moore-Penrose inverse tensors Hermitian}
\mathcal{S}^{\dagger} &=& \mathcal{A}^{\dagger} - \mathcal{E} \star_K \mathcal{X}^{H} \star_N \mathcal{A}^{\dagger} - \mathcal{A}^{\dagger} \star_M \mathcal{X}\star_K \mathcal{E}^{H} \nonumber \\
& &+\mathcal{E} \star_K (\mathcal{B}^{\dagger} + \mathcal{X}^{H} \star_N \mathcal{A}^{\dagger} \star_M \mathcal{X}) \star_K \mathcal{E}^{H},
\end{eqnarray}
where $\mathcal{E}\define \mathcal{Y} ( \mathcal{Y}^{H} \mathcal{Y} )^{\dagger}$. 

\end{corollary}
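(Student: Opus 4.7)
The plan is to derive this corollary as a direct specialization of Theorem~\ref{thm: Identity for Tensors with Moore-Penrose Inverse} in which the Hermitian hypotheses collapse the two decompositions $\mathcal{U} = \mathcal{X}_1 + \mathcal{Y}_1$ and $\mathcal{V}^H = \mathcal{X}_2 + \mathcal{Y}_2$ into a single decomposition.

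First, I will observe that when $\mathcal{A}^H = \mathcal{A}$, the column space $\mathfrak{C}(\mathcal{A}^H)$ coincides with $\mathfrak{C}(\mathcal{A})$, and hence its orthogonal complement (the left null space) is the same in both cases. Next, the hypothesis $\mathcal{U}^H = \mathcal{V}$ gives $\mathcal{V}^H = \mathcal{U}$, so the two decompositions required by Theorem~\ref{thm: Identity for Tensors with Moore-Penrose Inverse} are both decompositions of the same tensor $\mathcal{U}$ into a part in $\mathfrak{C}(\mathcal{A})$ and a part orthogonal to it. Since such an orthogonal decomposition is unique, we can set $\mathcal{X}_1 = \mathcal{X}_2 = \mathcal{X}$ and $\mathcal{Y}_1 = \mathcal{Y}_2 = \mathcal{Y}$, which in turn forces $\mathcal{E}_1 = \mathcal{E}_2 = \mathcal{E} = \mathcal{Y}(\mathcal{Y}^H\mathcal{Y})^\dagger$.

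Once these identifications are made, the four numbered conditions of Theorem~\ref{thm: Identity for Tensors with Moore-Penrose Inverse} reduce exactly to the four conditions listed in the corollary: the three equalities in item~(3) and the three equalities in item~(4) of the corollary are precisely the specializations of the corresponding conditions in Theorem~\ref{thm: Identity for Tensors with Moore-Penrose Inverse} upon substituting the common $\mathcal{X}$, $\mathcal{Y}$, and $\mathcal{E}$. Substituting the same equalities into the expression for $\mathcal{S}^\dagger$ in Theorem~\ref{thm: Identity for Tensors with Moore-Penrose Inverse} produces the claimed formula in Eq.~\eqref{eq: Sherman Morrison Woodbury identity for Moore-Penrose inverse tensors Hermitian}, and substituting them into the expression for $\mathcal{S}$ yields Eq.~\eqref{eq: S tensors Hermitian}.

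The only subtle point — and thus the main obstacle worth writing carefully — is justifying the identification $\mathcal{X}_1 = \mathcal{X}_2$ and $\mathcal{Y}_1 = \mathcal{Y}_2$. This requires invoking the fact that the orthogonal decomposition of a tensor with respect to $\mathfrak{C}(\mathcal{A})$ and its orthogonal complement is unique (as established in the discussion around Eq.~\eqref{eq: column space orthgonal to left null}), together with the Hermitian identity $\mathfrak{C}(\mathcal{A}) = \mathfrak{C}(\mathcal{A}^H)$. Once this reduction is in place, no further computation is needed — the corollary follows by direct invocation of Theorem~\ref{thm: Identity for Tensors with Moore-Penrose Inverse}, and the proof can be concluded in a few lines.
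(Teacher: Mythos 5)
Your proposal is correct and follows essentially the same route as the paper, which likewise proves the corollary by invoking Theorem~\ref{thm: Identity for Tensors with Moore-Penrose Inverse} with the subscripts $1$ and $2$ identified under the hypotheses $\mathcal{A}^H=\mathcal{A}$ and $\mathcal{U}^H=\mathcal{V}$. Your additional care in justifying $\mathcal{X}_1=\mathcal{X}_2$, $\mathcal{Y}_1=\mathcal{Y}_2$ via $\mathfrak{C}(\mathcal{A})=\mathfrak{C}(\mathcal{A}^H)$ is a reasonable elaboration of what the paper leaves implicit (and one may simply \emph{choose} the same decomposition for $\mathcal{V}^H=\mathcal{U}$, so uniqueness is not even strictly needed).
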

\textbf{Proof: }
Because $\mathcal{A}^{H} = \mathcal{A}$, and $\mathcal{U}^{H} = \mathcal{V}$, the proof from 
Theorem~\ref{thm: Identity for Tensors with Moore-Penrose Inverse} can be applied here by removing those subscript indices, $1$ and $2$. $\hfill \Box$

\subsection{Illustrative Examples}\label{sec: Example}

In this section, we will provide two examples to demonstarte the validity of Theorem~\ref{thm: Identity for Tensors with Moore-Penrose Inverse} and Corollary~\ref{cor: Identity for Tensors with Moore-Penrose Inverse X zero}. We have to use following tensor equation in this section.
\begin{eqnarray}\label{AZB equal D}
\mathcal{A} \star_N \mathcal{Z} \star_M \mathcal{B} =  ( \mathcal{A} \otimes \mathcal{B}^{H} ) \star_{(N+M)} \mathcal{Z},
\end{eqnarray}
where $\otimes$ is the Kronecker product of tensors~\cite{sun2016moore}. 

Following example is provided to verify Corollary~\ref{cor: Identity for Tensors with Moore-Penrose Inverse X zero}.

\begin{example}\label{example 1}
Given tensor $\mathcal{A} \in \mathbb{R}^{2 \times 2 \times 2 \times 2}$
\begin{eqnarray}\label{eq: tensor A at example 1}
\mathcal{A} &=&
\left[
    \begin{array}{cc : cc}
       a_{11,11} & a_{12,11} & a_{11,12} & a_{12,12}  \\
       a_{21,11} & a_{22,11} & a_{21,12} & a_{22,12}  \\ \hdashline[2pt/2pt]
       a_{11,21} & a_{12,21} & a_{11,22} & a_{12,22}   \\
       a_{21,21} & a_{22,21} & a_{11,22} & a_{22,22}   \\
    \end{array}
\right]
=
\left[
    \begin{array}{cc : cc}
       1 & -1 & 0 & 0  \\
       0 &  0 & -1 & 0 \\ \hdashline[2pt/2pt]
       0 &  1 & 0 & 0  \\
       0 &  0 & 1 & 0  \\
    \end{array}
\right]
\end{eqnarray}
Then, the column tensor $\mathbf{a}_{11}$ of tensor $\mathcal{A}$ is $
\left[
    \begin{array}{cc}
       1 & -1  \\
       0 &  0 
    \end{array}
\right]$,  the column tensor $\mathbf{a}_{12}$ of tensor $\mathcal{A}$ is $
\left[
    \begin{array}{cc}
       0 & 0  \\
       -1 &  0 
    \end{array}
\right]$,  the column tensor  $\mathbf{a}_{21}$ of tensor $\mathcal{A}$ is $
\left[
    \begin{array}{cc}
       0 & 1  \\
       0 &  0 
    \end{array}
\right]$,  and the column tensor $\mathbf{a}_{22}$ of tensor $\mathcal{A}$ is $
\left[
    \begin{array}{cc}
       0 & 0  \\
       1 &  0 
    \end{array}
\right]$.  

If we take Hermition for the tensor $\mathcal{A}$, we have 
\begin{eqnarray}\label{eq: tensor A H at example 1}
\mathcal{A}^{H} &=&
\left[
    \begin{array}{cc : cc}
       a_{11,11} & a_{11,12} & a_{12,11} & a_{12,12}  \\
       a_{11,21} & a_{11,22} & a_{12,21} & a_{12,22}  \\ \hdashline[2pt/2pt]
       a_{21,11} & a_{21,12} & a_{22,11} & a_{22,12}   \\
       a_{21,21} & a_{21,22} & a_{22,21} & a_{22,22}   \\
    \end{array}
\right]
=
\left[
    \begin{array}{cc : cc}
       1 & 0 & -1 & 0  \\
       0 &  0 & 1 & 0 \\ \hdashline[2pt/2pt]
       0 &  -1 & 0 & 0  \\
       0 &  1  & 0 & 0  \\
    \end{array}
\right]
\end{eqnarray}
Then, the column tensor $\mathbf{a}_{11}$ of tensor $\mathcal{A}^{H}$ is $
\left[
    \begin{array}{cc}
       1 &  0  \\
       0 &  0 
    \end{array}
\right]$,  the column tensor $\mathbf{a}_{12}$ of tensor $\mathcal{A^{H}}$ is $
\left[
    \begin{array}{cc}
       -1 & 0  \\
       1 &  0 
    \end{array}
\right]$,  the column tensor  $\mathbf{a}_{21}$ of tensor $\mathcal{A}^{H}$ is $
\left[
    \begin{array}{cc}
       0 & -1  \\
       0 &  1 
    \end{array}
\right]$,  and the column tensor $\mathbf{a}_{22}$ of tensor $\mathcal{A}^{H}$ is $
\left[
    \begin{array}{cc}
       0 &  0  \\
       0 &  0 
    \end{array}
\right]$.

The tensor $\mathcal{B} \in \mathbb{R}^{1 \times 1 \times 1 \times 1}$ has only one entry with value $1$, the value in this tensor $\mathcal{B}$ is denoted as $1 \in \mathbb{R}^{1 \times 1 \times 1 \times 1}$ . The tensor $\mathcal{U} \in \mathbb{R}^{2 \times 2 \times 1 \times 1}$ is
\begin{eqnarray}\label{eq: tensor U at example 1}
\mathcal{U} &=&
\left[
    \begin{array}{cc}
       0 &  0 \\
       0 &  1 
    \end{array}
\right],
\end{eqnarray}
and 
the tensor $\mathcal{V} \in \mathbb{R}^{1 \times 1 \times 2 \times 2}$ is
\begin{eqnarray}\label{eq: tensor V at example 1}
\mathcal{V} &=&
\left[
    \begin{array}{cc}
       0 &  1 \\
       0 &  1 
    \end{array}
\right].  
\end{eqnarray}

Then, we have the tensor $\mathcal{S}$ expressed as 
\begin{eqnarray}\label{eq: tensor S at example 1}
\mathcal{S}&=&\mathcal{A} + \mathcal{U} \mathcal{B} \mathcal{V} \nonumber \\
&=& \left[
    \begin{array}{cc : cc}
       1 & -1 & 0 & 0  \\
       0 &  0 & -1 & 0 \\ \hdashline[2pt/2pt]
       0 &  1 & 0 & 0  \\
       0 &  0 & 1 & 0  \\
    \end{array}
\right] + \mathcal{U} \otimes \mathcal{V}^{H} \star_{4} \mathcal{B} \nonumber \\
&=&\left[
    \begin{array}{cc : cc}
       1 & -1 & 0 & 0  \\
       0 &  0 & -1 & 0 \\ \hdashline[2pt/2pt]
       0 &  1 & 0 & 0  \\
       0 &  0 & 1 & 0  \\
    \end{array}
\right] + \left[
    \begin{array}{cc : cc}
       0 &  0 & 0 & 0  \\
       0 &  0 & 0 & 1 \\ \hdashline[2pt/2pt]
       0 &  0 & 0 & 0  \\
       0 &  0 & 0 & 1  \\
    \end{array}
\right] = \left[
    \begin{array}{cc : cc}
       1 & -1 & 0 & 0  \\
       0 &  0 & -1 & 1 \\ \hdashline[2pt/2pt]
       0 &  1 & 0 & 0  \\
       0 &  0 & 1 & 1  \\
    \end{array}
\right].
\end{eqnarray}
The goal is to verify SHERMAN-MORRISON-WOODBURY identiy for the inverse of the tensor $\mathcal{S}$.

Because the tensor $\mathcal{A}$ is not invertible, the Moore-Penrose inverse of the tensor $\mathcal{A}$ becomes
\begin{eqnarray}
\mathcal{A}^{\dagger} &=&
\left[
    \begin{array}{cc : cc}
       1 & 0 & 0 & 0  \\
       1 & 0 & 1 & 0 \\ \hdashline[2pt/2pt]
       0 &  -\frac{1}{2} & 0 & 0  \\
       0 &  \frac{1}{2} & 0 & 0  \\
    \end{array}
\right].
\end{eqnarray}

Before applying Theorem~\ref{thm: Identity for Tensors with Moore-Penrose Inverse}, we have to decompose the tensors $\mathcal{U}$ and $\mathcal{V}^{H}$ according to the column-tensor spaces $\mathfrak{C}(\mathcal{A})$ and $\mathfrak{C}(\mathcal{A}^{H})$. They are decomposed as following:
\begin{eqnarray}\label{eq: decomp U example 1}
\mathcal{U} &=& \mathcal{Y}_1 \nonumber \\
&=& \left[
    \begin{array}{cc}
       0 &  0 \\
       0 &  1 
    \end{array}
\right];
\end{eqnarray}
and 
\begin{eqnarray}\label{eq: decomp V example 1}
\mathcal{V} &=& \mathcal{Y}^H_2 \nonumber \\
&=& \left[
    \begin{array}{cc}
       0 &  1 \\
       0 &  1 
    \end{array}
\right].
\end{eqnarray}
Therefore, the tensors $\mathcal{U}$ and $\mathcal{V}$ are at orthogonal spaces of $\mathfrak{C}(\mathcal{A})$ and $\mathfrak{C}(\mathcal{A}^{H})$, respectively.

Since we define $\mathcal{E}_i \define \mathcal{Y}_i ( \mathcal{Y}^{H}_i \mathcal{Y}_i )^{\dagger}$ for $i =1, 2$, the tensors $\mathcal{E}_i$ can be evaluated as 
\begin{eqnarray}\label{eq: E 1 example 1}
\mathcal{E}_1 &=&\mathcal{Y}_1 ( \mathcal{Y}^{H}_1 \mathcal{Y}_1 )^{\dagger} \nonumber \\
&=&
\left[
    \begin{array}{cc}
       0 &  0 \\
       0 &  1 
    \end{array}
\right] \star_2 (1 \in \mathbb{R}^{1 \times 1 \times 1 \times 1}) \nonumber \\
&=&
\left[
    \begin{array}{cc}
       0 &  0 \\
       0 &  1 
    \end{array}
\right] \in \mathbb{R}^{2 \times 2 \times 1 \times 1};
\end{eqnarray}
and
\begin{eqnarray}\label{eq: E 2 example 1}
\mathcal{E}_2 &=&\mathcal{Y}_2 ( \mathcal{Y}^{H}_2 \mathcal{Y}_2 )^{\dagger} \nonumber \\
&=&
\left[
    \begin{array}{cc}
       0 &  1 \\
       0 &  1 
    \end{array}
\right] \star_2 (\frac{1}{2} \in \mathbb{R}^{1 \times 1 \times 1 \times 1} ) \nonumber \\
&=&
\left[
    \begin{array}{cc}
       0 &  \frac{1}{2} \\
       0 &  \frac{1}{2}
    \end{array}
\right] \in \mathbb{R}^{2 \times 2 \times 1 \times 1},
\end{eqnarray}
where $\frac{1}{2} \in \mathbb{R}^{1 \times 1 \times 1 \times 1}$ is a single entry tensor with value $\frac{1}{2}$ with tensor dimension $1 \times 1 \times 1 \times 1$ (order 4).

We are ready to evalute following terms $\mathcal{E}\mathcal{X}^{H}_2 \mathcal{A}^{\dagger}$, $\mathcal{A}^{\dagger} \mathcal{X}_1 \mathcal{E}_1^{H}$, and \\ $\mathcal{E}_2 (\mathcal{B}^{\dagger} + \mathcal{X}^{H}_2 \mathcal{A}^{\dagger} \mathcal{X}_1 ) \mathcal{E}^{H}_1$. But tensors $\mathcal{E}\mathcal{X}^{H}_2 \mathcal{A}^{\dagger}$ and $\mathcal{A}^{\dagger} \mathcal{X}_1 \mathcal{E}_1^{H}$ are zero tensors since $\mathcal{X}_1$ and  $\mathcal{X}_2$ are zero tensors. Because we also have following:
\begin{eqnarray}\label{X 2 A dagger X 1 example 1}
 \mathcal{X}^{H}_2 \mathcal{A}^{\dagger} \mathcal{X}_1 &=&  0 \in \mathbb{R}^{1 \times 1 \times 1 \times 1},
\end{eqnarray}
we have
\begin{eqnarray}\label{E 2 B dagger X 2 example 1}
\mathcal{E}_2 (\mathcal{B}^{\dagger} + \mathcal{X}^{H}_2 \mathcal{A}^{\dagger} \mathcal{X}_1 ) \mathcal{E}^{H}_1 &=& \mathcal{E}_2 \star_2 ( 1 \in \mathbb{R}^{1 \times 1 \times 1 \times 1} ) \star_2 \mathcal{E}^{H}_1 \nonumber \\
&=&  \left[
    \begin{array}{cc : cc}
       0 & 0 & 0 & 0  \\
       0 & 0 & 0 & 0 \\ \hdashline[2pt/2pt]
       0 &  0 & 0 & \frac{1}{2}  \\
       0 &  0 & 0 & \frac{1}{2} 
    \end{array}
\right]
\end{eqnarray}

Finally, from the Eq.~\eqref{E 2 B dagger X 2 example 1}, we have 
\begin{eqnarray}
\mathcal{S}^{\dagger} &=& \mathcal{A}^{\dagger} + \mathcal{E}_2 (\mathcal{B}^{\dagger} + \mathcal{X}^{H}_2 \mathcal{A}^{\dagger} \mathcal{X}_1 ) \mathcal{E}^{H}_1 \nonumber \\
&=& \left[
    \begin{array}{cc : cc}
       1 & 0 & 0 & 0  \\
       1 & 0 & 1 & 0 \\ \hdashline[2pt/2pt]
       0 &  -\frac{1}{2} & 0 & \frac{1}{2}  \\
       0 &  \frac{1}{2} & 0 & \frac{1}{2} 
    \end{array}
\right],
\end{eqnarray}
which is the inverse of the tensor $\mathcal{S}$. 

\end{example}

Following example will be more complicated by considering the situations that projected column-tensor parts of $\mathcal{U}$ and $\mathcal{V}$ are non-zero tensors. See Theorem~\ref{thm: Identity for Tensors with Moore-Penrose Inverse}.

\begin{example}\label{example 2}
Given same tensors $\mathcal{A} \in \mathbb{R}^{2 \times 2 \times 2 \times 2}$ and $\mathcal{B} \in \mathbb{R}^{1 \times 1 \times 1 \times 1}$ as Example~\ref{example 1}, the tensor $\mathcal{U} \in \mathbb{R}^{2 \times 2 \times 1 \times 1}$ is
\begin{eqnarray}\label{eq: tensor U at example 2}
\mathcal{U} &=&
\left[
    \begin{array}{cc}
       0 &  1 \\
       0 &  1 
    \end{array}
\right],
\end{eqnarray}
and 
the tensor $\mathcal{V} \in \mathbb{R}^{1 \times 1 \times 2 \times 2}$ is
\begin{eqnarray}\label{eq: tensor V at example 2}
\mathcal{V} &=&
\left[
    \begin{array}{cc}
       0 &  0 \\
       0 &  2 
    \end{array}
\right].  
\end{eqnarray}

Then, we have the tensor $\mathcal{S}$ expressed as 
\begin{eqnarray}\label{eq: tensor S at example 2}
\mathcal{S}&=&\mathcal{A} + \mathcal{U} \mathcal{B} \mathcal{V} \nonumber \\
&=& \left[
    \begin{array}{cc : cc}
       1 & -1 & 0 & 0  \\
       0 &  0 & -1 & 0 \\ \hdashline[2pt/2pt]
       0 &  1 & 0 & 0  \\
       0 &  0 & 1 & 0  \\
    \end{array}
\right] + \mathcal{U} \otimes \mathcal{V}^{H} \star_{4} \mathcal{B} \nonumber \\
&=&\left[
    \begin{array}{cc : cc}
       1 & -1 & 0 & 0  \\
       0 &  0 & -1 & 0 \\ \hdashline[2pt/2pt]
       0 &  1 & 0 & 0  \\
       0 &  0 & 1 & 0  \\
    \end{array}
\right] + \left[
    \begin{array}{cc : cc}
       0 &  0 & 0 & 0  \\
       0 &  0 & 0 & 0 \\ \hdashline[2pt/2pt]
       0 &  0 & 0 & 2  \\
       0 &  0 & 0 & 2  \\
    \end{array}
\right] = \left[
    \begin{array}{cc : cc}
       1 & -1 & 0 & 0  \\
       0 &  0 & -1 & 0 \\ \hdashline[2pt/2pt]
       0 &  1 & 0 & 2  \\
       0 &  0 & 1 & 2  \\
    \end{array}
\right].
\end{eqnarray}
We wish to show SHERMAN-MORRISON-WOODBURY identiy for the inverse of the tensor $\mathcal{S}$.

Before applying Theorem~\ref{thm: Identity for Tensors with Moore-Penrose Inverse}, we have to decompose the tensors $\mathcal{U}$ and $\mathcal{V}^{H}$ according to the column-tensor spaces $\mathfrak{C}(\mathcal{A})$ and $\mathfrak{C}(\mathcal{A}^{H})$. They are decomposed as following:
\begin{eqnarray}\label{eq: decomp U example 2}
\mathcal{U} &=& \mathcal{X}_1 + \mathcal{Y}_1 \nonumber \\
&=&
\left[
    \begin{array}{cc}
       0 &  1 \\
       0 &  0 
    \end{array}
\right] + \left[
    \begin{array}{cc}
       0 &  0 \\
       0 &  1 
    \end{array}
\right],
\end{eqnarray}
and 
\begin{eqnarray}\label{eq: decomp V example 2}
\mathcal{V} &=& \mathcal{X}^H_2 + \mathcal{Y}^H_2 \nonumber \\
&=&
\left[
    \begin{array}{cc}
       0 &  -1 \\
       0 &  1 
    \end{array}
\right] + \left[
    \begin{array}{cc}
       0 &  1 \\
       0 &  1 
    \end{array}
\right].
\end{eqnarray}
Under this decomposition, the subtensors $\mathcal{X}_1$ and  $\mathcal{X}_2$ are in the column-tensor spaces $\mathfrak{C}(\mathcal{A})$ and $\mathfrak{C}(\mathcal{A}^{H})$, respectively. Moreover, the subtensors $\mathcal{Y}_1$ and $\mathcal{Y}_2$ are orthgonal to the column-tensor spaces $\mathfrak{C}(\mathcal{A})$ and $\mathfrak{C}(\mathcal{A}^{H})$, respectively. Since we define $\mathcal{E}_i \define \mathcal{Y}_i ( \mathcal{Y}^{H}_i \mathcal{Y}_i )^{\dagger}$ for $i =1, 2$, the tensors $\mathcal{E}_i$ are evaluated at Eqs.~\eqref{eq: E 1 example 1} and~\eqref{eq: E 2 example 1} since $\mathcal{Y}_i$ are same with the previous example.

We are ready to evalute following terms $\mathcal{E}\mathcal{X}^{H}_2 \mathcal{A}^{\dagger}$, $\mathcal{A}^{\dagger} \mathcal{X}_1 \mathcal{E}_1^{H}$, and \\ $\mathcal{E}_2 (\mathcal{B}^{\dagger} + \mathcal{X}^{H}_2 \mathcal{A}^{\dagger} \mathcal{X}_1 ) \mathcal{E}^{H}_1$.

\begin{eqnarray}\label{A V 1 C 1}
\mathcal{A}^{\dagger} \mathcal{X}_1 \mathcal{E}^{H}_1 &=& \left[
    \begin{array}{cc : cc}
       1 & 0 & 0 & 0  \\
       1 & 0 & 1 & 0 \\ \hdashline[2pt/2pt]
       0 &  -\frac{1}{2} & 0 & 0  \\
       0 &  \frac{1}{2} & 0 & 0  \\
    \end{array}
\right] \star_2 \left[
    \begin{array}{cc}
       0 & 1 \\
       0 & 0
    \end{array}
\right] \otimes \left[
    \begin{array}{cc}
       0 & 0 \\
       0 &  1
    \end{array}
\right] \nonumber \\
&=& \left[
    \begin{array}{cc : cc}
       1 & 0 & 0 & 0  \\
       1 & 0 & 1 & 0 \\ \hdashline[2pt/2pt]
       0 &  -\frac{1}{2} & 0 & 0  \\
       0 &  \frac{1}{2} & 0 & 0  \\
    \end{array}
\right] \star_2 \left[
    \begin{array}{cc : cc}
       0 & 0 & 0 & 0  \\
       0 & 0 & 0 & 0 \\ \hdashline[2pt/2pt]
       0 & 0 & 0 & 1  \\
       0 & 0 & 0 & 0  \\
    \end{array}
\right] = \left[
    \begin{array}{cc : cc}
       0 & 0 & 0 & 0  \\
       0 & 0 & 0 & 0 \\ \hdashline[2pt/2pt]
       0 & 0 & 0 & 0  \\
       0 & 0 & 1 & 0  \\
    \end{array}
\right]. 
\end{eqnarray}

\begin{eqnarray}\label{C 2 V 2 A}
\mathcal{E}_2 \mathcal{X}^{H}_2\mathcal{A}^{\dagger} &=& 
 \left[
    \begin{array}{cc}
       0 &  \frac{1}{2} \\
       0 &  \frac{1}{2}
    \end{array}
\right] \otimes \left[
    \begin{array}{cc}
       0 &  -1 \\
       0 &   1
    \end{array}
\right] \star_2 \left[
    \begin{array}{cc : cc}
       1 & 0 & 0 & 0  \\
       1 & 0 & 1 & 0 \\ \hdashline[2pt/2pt]
       0 &  -\frac{1}{2} & 0 & 0  \\
       0 &  \frac{1}{2} & 0 & 0  \\
    \end{array}
\right]  \nonumber \\
&=&  \left[
    \begin{array}{cc : cc}
       0 & 0 & 0 & -\frac{1}{2}  \\
       0 & 0 & 0 & -\frac{1}{2} \\ \hdashline[2pt/2pt]
       0 & 0 & 0 & \frac{1}{2}  \\
       0 & 0 & 0 & \frac{1}{2}  \\
    \end{array}
\right] \star_2 \left[
    \begin{array}{cc : cc}
       1 & 0 & 0 & 0  \\
       1 & 0 & 1 & 0 \\ \hdashline[2pt/2pt]
       0 &  -\frac{1}{2} & 0 & 0  \\
       0 &  \frac{1}{2} & 0 & 0  \\
    \end{array}
\right] = \left[
    \begin{array}{cc : cc}
       0 & 0 & 0 & 0  \\
       0 & 0 & 0 & 0 \\ \hdashline[2pt/2pt]
       0 & \frac{1}{2} & 0 & 0  \\
       0 & \frac{1}{2} & 0 & 0  \\
    \end{array}
\right]. 
\end{eqnarray}

Since we have following:
\begin{eqnarray}\label{X 2 A dagger X 1 example 2}
 \mathcal{X}^{H}_2 \mathcal{A}^{\dagger} \mathcal{X}_1 &=& \left[
    \begin{array}{cc}
       0 &  -1 \\
       0 &   1
    \end{array}
\right] \star_2 \left[
    \begin{array}{cc : cc}
       1 & 0 & 0 & 0  \\
       1 & 0 & 1 & 0 \\ \hdashline[2pt/2pt]
       0 &  -\frac{1}{2} & 0 & 0  \\
       0 &  \frac{1}{2} & 0 & 0  \\
    \end{array}
\right] \star_2
\left[
    \begin{array}{cc}
       0 &  1 \\
       0 &  0
    \end{array}
\right] \nonumber \\
&=& \left[
    \begin{array}{cc}
       0 &  -1 \\
       0 &   1
    \end{array}
\right] \star_2 \left[
    \begin{array}{cc}
       0 &  0 \\
       1 &  0
    \end{array}
\right] = 0 \in \mathbb{R}^{1 \times 1 \times 1 \times 1},
\end{eqnarray}
we have
\begin{eqnarray}\label{E 2 B dagger X 2 example 2}
\mathcal{E}_2 (\mathcal{B}^{\dagger} + \mathcal{X}^{H}_2 \mathcal{A}^{\dagger} \mathcal{X}_1 ) \mathcal{E}^{H}_1 &=& \mathcal{E}_2 \star_2 ( 1 \in \mathbb{R}^{1 \times 1 \times 1 \times 1} ) \star_2 \mathcal{E}^{H}_1 \nonumber \\
&=&   \left[
    \begin{array}{cc : cc}
       0 & 0 & 0 & 0  \\
       0 & 0 & 0 & 0 \\ \hdashline[2pt/2pt]
       0 &  0 & 0 & \frac{1}{2}  \\
       0 &  0 & 0 & \frac{1}{2} 
    \end{array}
\right]
\end{eqnarray}

Finally, from the Eqs.~\eqref{A V 1 C 1},~\eqref{C 2 V 2 A},~\eqref{E 2 B dagger X 2 example 2}, we have 
\begin{eqnarray}
\mathcal{S}^{\dagger} &=& \mathcal{A}^{\dagger} - \mathcal{A}^{\dagger} \mathcal{X}_1 \mathcal{E}^{H}_1 - \mathcal{E}_2 \mathcal{X}^{H}_2\mathcal{A}^{\dagger} + \mathcal{E}_2 (\mathcal{B}^{\dagger} + \mathcal{X}^{H}_2 \mathcal{A}^{\dagger} \mathcal{X}_1 ) \mathcal{E}^{H}_1 \nonumber \\
&=& \left[
    \begin{array}{cc : cc}
       1 & 0 & 0 & 0  \\
       1 & 0 & 1 & 0 \\ \hdashline[2pt/2pt]
       0 &  -1 & 0 & \frac{1}{2}  \\
       0 &  0 & -1 & \frac{1}{2} 
    \end{array}
\right],
\end{eqnarray}
which is the inverse of the tensor $\mathcal{S}$. 

\end{example}

\section{Application: Sensitivity Analysis for Multilinear Systems}\label{sec:Sensitivity Analysis for Multilinear Systems}

In this section, we will apply the results obtained in Section~\ref{sec: Identity for Tensors with Moore-Penrose Inverse} to perform sensitivity analysis for a multilinear system of equations, i.e. $\mathcal{A}\mathcal{X} = \mathcal{D}$,  by deriving the normalized upper bound for the error in the solution when coefficient tensors are perturbed in Section~\ref{sec:Sensitivity Analysis}. In Section~\ref{sec:Numerical Evaluation}, we investigate the effects of perturbation values $\epsilon_A, \epsilon_D$ to the normalized solution error $\frac{\left\Vert \mathcal{Y} - \mathcal{X}  \right\Vert } {\left\Vert \mathcal{X} \right\Vert}$, denoted as $E_n$. All norms discussed in this paper are based on the Frobenius norm definition. 

\subsection{Sensitivity Analysis}\label{sec:Sensitivity Analysis}

Serveral preparation lemmas will be given before presenting our results asscoaited to sensitivity analysis for multilinear systems.

\begin{lemma}\label{Submultiplicative of Frobenius Norm}
Let $\mathcal{A} \in \mathbb{C}^{I_1 \times \cdots \times I_M \times J_1 \times \cdots \times J_N}$
and $\mathcal{B} \in \mathbb{C}^{J_1 \times \cdots \times J_N \times K_1 \times \cdots \times K_L}$, we 
have following inequality for Frobenius norm of tensors: 
\begin{eqnarray}
\left\Vert \mathcal{A} \star_N \mathcal{B} \right\Vert \leq \left\Vert \mathcal{A} \right\Vert \left\Vert  \mathcal{B} \right\Vert.
\end{eqnarray}
\end{lemma}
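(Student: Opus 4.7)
The plan is to reduce the inequality to an entrywise application of the Cauchy–Schwarz inequality followed by a double sum rearrangement. By definition of the Frobenius norm via the inner product in Eq.~\eqref{eq: tensor inner product def}, we have
\begin{eqnarray*}
\left\Vert \mathcal{A} \star_N \mathcal{B} \right\Vert^2 = \sum_{i_1,\ldots,i_M,k_1,\ldots,k_L} \left| (\mathcal{A} \star_N \mathcal{B})_{i_1,\ldots,i_M,k_1,\ldots,k_L} \right|^2,
\end{eqnarray*}
and the Einstein product definition in Eq.~\eqref{eq: Einstein product definition} expresses each entry as a sum over the contracted indices $j_1,\ldots,j_N$. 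So the first step is to expand the entry, then treat it as an inner product of two complex sequences indexed by $(j_1,\ldots,j_N)$.

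Next, for each fixed tuple $(i_1,\ldots,i_M,k_1,\ldots,k_L)$ I would apply the classical Cauchy–Schwarz inequality to bound
\begin{eqnarray*}
\left| \sum_{j_1,\ldots,j_N} a_{i_1,\ldots,i_M,j_1,\ldots,j_N} b_{j_1,\ldots,j_N,k_1,\ldots,k_L} \right|^2 \\
\leq \left( \sum_{j_1,\ldots,j_N} |a_{i_1,\ldots,i_M,j_1,\ldots,j_N}|^2 \right) \left( \sum_{j_1,\ldots,j_N} |b_{j_1,\ldots,j_N,k_1,\ldots,k_L}|^2 \right).
\end{eqnarray*}
Summing this bound over all $(i_1,\ldots,i_M)$ and $(k_1,\ldots,k_L)$ and observing that the two factors on the right decouple — the first factor no longer depends on the $k$-indices and the second no longer depends on the $i$-indices — yields
\begin{eqnarray*}
\left\Vert \mathcal{A} \star_N \mathcal{B} \right\Vert^2 \leq \left( \sum_{i,j} |a_{i,j}|^2 \right) \left( \sum_{j,k} |b_{j,k}|^2 \right) = \left\Vert \mathcal{A} \right\Vert^2 \left\Vert \mathcal{B} \right\Vert^2,
\end{eqnarray*}
and taking square roots finishes the argument.

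An alternative route would be to invoke the unfolding map $\varphi$ from Eq.~\eqref{eq:unfold mapping}: since $\varphi$ is an isometry with respect to the Frobenius norm and converts the order-$N$ Einstein product into ordinary matrix multiplication, the inequality follows from the well-known submultiplicativity of the Frobenius norm for matrices. I would prefer the direct Cauchy–Schwarz approach because it avoids having to formally verify the compatibility $\varphi(\mathcal{A} \star_N \mathcal{B}) = \varphi(\mathcal{A})\, \varphi(\mathcal{B})$. I do not anticipate any real obstacle here; the only bookkeeping subtlety is keeping the multi-indices straight so that the decoupling step after summing is justified.
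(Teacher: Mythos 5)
Your proposal is correct and follows essentially the same route as the paper's own proof: expand each entry of $\mathcal{A} \star_N \mathcal{B}$ via the Einstein product, apply the Cauchy--Schwarz inequality to the contracted sum over $(j_1,\ldots,j_N)$ for each fixed pair of free multi-indices, then sum and decouple the two factors to obtain $\left\Vert \mathcal{A} \right\Vert^2 \left\Vert \mathcal{B} \right\Vert^2$. No gaps; the decoupling step you flag as the only bookkeeping subtlety is exactly the rearrangement the paper performs.
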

\begin{proof}
Because
\begin{eqnarray}
\left\Vert \mathcal{A} \star_N \mathcal{B} \right\Vert^2 &=& \sum_{i_1, \cdots, i_M,k_1 \times \cdots \times k_L} |\sum\limits_{j_1, \cdots, j_N} a_{i_1, \cdots, i_M, j_1, \cdots,j_N}b_{j_1, \cdots, j_N, k_1, \cdots,k_L}|^2 \nonumber \\
&\leq&  \sum_{i_1, \cdots, i_M, k_1 \times \cdots \times k_L} \left[ \left(\sum\limits_{j_1, \cdots, j_N} | a_{i_1, \cdots, i_M, j_1, \cdots,j_N} |^2 \right) \right. \times \nonumber \\
&  & \left. \left(\sum\limits_{j_1, \cdots, j_N} | b_{j_1, \cdots, j_N, k_1, \cdots,k_L} |^2 \right) \right] \nonumber \\
&=&  \left( \sum\limits_{i_1, \cdots, i_M} \left(\sum\limits_{j_1, \cdots, j_N} | a_{i_1, \cdots, i_M, j_1, \cdots,j_N} |^2 \right) \right) \times \nonumber \\
&  &\left(\sum_{j_1, \cdots, j_N} \left(\sum\limits_{ k_1 \times \cdots \times k_L} | b_{j_1, \cdots, j_N, k_1, \cdots,k_L} |^2 \right) \right) =  \left\Vert \mathcal{A} \right\Vert^2\left\Vert \mathcal{B} \right\Vert^2
\end{eqnarray}
where the inequality is based on Cauchy–Schwarz inequality. By taking square root of both sides, the lemma is established. 
\end{proof}

\begin{lemma}\label{Triangle Inequality of Frobenius Norm}
Let $\mathcal{A} \in \mathbb{C}^{I_1 \times \cdots \times I_M \times J_1 \times \cdots \times J_N}$
and $\mathcal{B} \in \mathbb{C}^{I_1 \times \cdots \times I_M \times J_1 \times \cdots \times J_N}$, we 
have following inequality for Frobenius norm of tensors: 
\begin{eqnarray}
\left\Vert \mathcal{A} + \mathcal{B} \right\Vert \leq \left\Vert \mathcal{A} \right\Vert + \left\Vert  \mathcal{B} \right\Vert.
\end{eqnarray}
\end{lemma}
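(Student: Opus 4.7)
The plan is to reduce this triangle inequality to the standard Minkowski/Cauchy--Schwarz argument in an inner-product space, using the inner product $\langle \mathcal{A}, \mathcal{B}\rangle = \mathrm{Tr}(\mathcal{A}^H\star_M \mathcal{B})$ that was introduced right before the Frobenius norm definition in Eq.~\eqref{eq:Frobenius norm}. The key observation is that, by unwinding the trace, $\langle \mathcal{A},\mathcal{A}\rangle$ equals the sum of $|a_{i_1,\ldots,i_M,j_1,\ldots,j_N}|^2$ over all indices, so $\|\cdot\|$ is genuinely a Hilbert-space norm induced by a sesquilinear inner product.

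First I would expand
\begin{eqnarray}
\|\mathcal{A}+\mathcal{B}\|^2 = \langle \mathcal{A}+\mathcal{B},\mathcal{A}+\mathcal{B}\rangle = \|\mathcal{A}\|^2 + \langle \mathcal{A},\mathcal{B}\rangle + \langle \mathcal{B},\mathcal{A}\rangle + \|\mathcal{B}\|^2,
\end{eqnarray}
which uses only the (conjugate-)linearity of the inner product in each argument, together with $\overline{\langle \mathcal{A},\mathcal{B}\rangle}=\langle \mathcal{B},\mathcal{A}\rangle$; both properties are immediate from the entrywise formula for the trace of $\mathcal{A}^H \star_M \mathcal{B}$. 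The cross term is bounded by $2\,\mathrm{Re}\langle \mathcal{A},\mathcal{B}\rangle \leq 2|\langle \mathcal{A},\mathcal{B}\rangle|$.

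Next I would establish Cauchy--Schwarz, $|\langle \mathcal{A},\mathcal{B}\rangle| \leq \|\mathcal{A}\|\,\|\mathcal{B}\|$, for the tensor inner product. The cleanest route is to note that, after rewriting the trace entrywise, $\langle \mathcal{A},\mathcal{B}\rangle$ coincides with the ordinary $\ell^2$ inner product of the two tensors viewed as vectors indexed by $(i_1,\ldots,i_M,j_1,\ldots,j_N)$; the classical Cauchy--Schwarz inequality then applies verbatim. (Alternatively, one can mimic the usual Hilbert-space argument: consider $\|\mathcal{A}+t\mathcal{B}\|^2\geq 0$ for real $t$ and optimize over $t$, which avoids any appeal to a vectorization.)

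Combining the two steps yields $\|\mathcal{A}+\mathcal{B}\|^2 \leq \|\mathcal{A}\|^2 + 2\|\mathcal{A}\|\,\|\mathcal{B}\| + \|\mathcal{B}\|^2 = (\|\mathcal{A}\|+\|\mathcal{B}\|)^2$, and taking positive square roots gives the claim. There is no real obstacle here; the only minor care needed is ensuring the inner product formula $\langle \mathcal{A},\mathcal{B}\rangle = \mathrm{Tr}(\mathcal{A}^H\star_M\mathcal{B})$ actually simplifies to $\sum \overline{a_{\cdot}} b_{\cdot}$ so that sesquilinearity and the Cauchy--Schwarz reduction are transparent, which is a short unpacking of Definitions~\ref{def: tensor conjugate transpose} and the Einstein product together with Eq.~\eqref{eq: tensor trace def}.
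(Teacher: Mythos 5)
Your proposal is correct and is essentially the same argument as the paper's: both expand $\left\Vert \mathcal{A}+\mathcal{B}\right\Vert^2$, bound the cross term by $2\left\Vert\mathcal{A}\right\Vert\left\Vert\mathcal{B}\right\Vert$ via Cauchy--Schwarz applied to the entries, and take square roots. The only cosmetic difference is that the paper bounds the cross term entrywise by $2\sum |a_{\cdot}||b_{\cdot}|$ rather than phrasing it through the sesquilinear inner product $\mathrm{Tr}(\mathcal{A}^H\star_M\mathcal{B})$ as you do.
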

\begin{proof}
Because
\begin{eqnarray}
\left\Vert \mathcal{A} + \mathcal{B} \right\Vert ^2 &\leq&  \left\Vert \mathcal{A} \right\Vert^{2} + 
 \left\Vert \mathcal{B} \right\Vert^{2} + 
2 \sum\limits_{i_1, \cdots, i_M, j_1, \cdots,j_N} |a_{i_1, \cdots, i_M, j_1, \cdots,j_N} |  |b_{i_1, \cdots, i_M, j_1, \cdots,j_N} |  \nonumber \\
&\stackrel{1}{\leq}&  \left\Vert \mathcal{A} \right\Vert^{2} + 
 \left\Vert \mathcal{B} \right\Vert^{2} + 2 \left\Vert \mathcal{A} \right\Vert \left\Vert \mathcal{B} \right\Vert
 \nonumber \\ 
&=&(  \left\Vert \mathcal{A} \right\Vert  + \left\Vert \mathcal{B} \right\Vert   )^{2}
\end{eqnarray}
where the ineqsuality $\stackrel{1}{\leq}$ is based on Cauchy–Schwarz inequality. By taking square root of both sides, the lemma is established. 
\end{proof}

Given a multilinear system of equations, the exact solution expressed by tensor inverse or Moore-Penrose inverse is given by following Theorem. The proof can be found at~\cite{behera2017further}. 

\begin{theorem}\label{Theorem: AX equal D Moore}
For given tensors $\mathcal{A} \in \mathbb{C}^{K_1 \times \cdots \times K_P \times I_1 \times \cdots \times I_M}$, \nonumber \\
$\mathcal{D} \in \mathbb{C}^{K_1 \times \cdots \times K_P \times L_1 \times \cdots \times L_Q}$, the tensor equation
\begin{eqnarray}\label{eq: Theorem AX equal D Moore}
\mathcal{A} \star_M \mathcal{X} = \mathcal{D},
\end{eqnarray}
has a solution if and only if $\mathcal{A} \star_M \mathcal{A}^{\dagger} \star_P \mathcal{D} =  \mathcal{D}$.
The solution can be expressed as 
\begin{eqnarray}\label{eq: Theorem AX equal D Standard sol Moore}
\mathcal{X} = \mathcal{A}^{\dagger} \star_P \mathcal{D} + (\mathcal{I} - \mathcal{A}^{\dagger} \star_P \mathcal{A})\star_M \mathcal{U},
\end{eqnarray}
where $\mathcal{I}$ is the identiy tensor in $\mathbb{C}^{ I_1 \times \cdots \times I_M \times  I_1 \times \cdots \times I_M}$ and  $\mathcal{U}$ is an arbitrary tensor in $\mathbb{C}^{I_1 \times \cdots \times I_M \times L_1 \times \cdots \times L_Q}$.

If the tensor $\mathcal{A}$ is invertible, then the Eq.~\eqref{eq: Theorem AX equal D Standard sol Moore} can be further reduced as 
\begin{eqnarray}\label{eq: Theorem AX equal D Standard sol}
\mathcal{X} = \mathcal{A}^{-1} \star_P \mathcal{D}.
\end{eqnarray}
\end{theorem}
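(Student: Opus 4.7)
The plan is to establish the equivalence in the standard Moore-Penrose style: first derive necessity of the compatibility condition $\mathcal{A} \star_M \mathcal{A}^{\dagger} \star_P \mathcal{D} = \mathcal{D}$, then exhibit the particular solution $\mathcal{A}^{\dagger} \star_P \mathcal{D}$, and finally characterize the full solution set by parameterizing the null-space contribution via $(\mathcal{I} - \mathcal{A}^{\dagger} \star_P \mathcal{A})\star_M \mathcal{U}$. The invertible case will then fall out as a corollary.

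For necessity, I would assume a solution $\mathcal{X}_0$ exists and pre-multiply $\mathcal{A} \star_M \mathcal{X}_0 = \mathcal{D}$ by $\mathcal{A} \star_M \mathcal{A}^{\dagger} \star_P$; using property (1) of Definition~\ref{def: Moore-Penrose Inverse}, namely $\mathcal{A} \star_N \mathcal{A}^{\dagger} \star_M \mathcal{A} = \mathcal{A}$ read with the appropriate index blocks, the composite collapses to $\mathcal{A} \star_M \mathcal{X}_0 = \mathcal{D}$. Conversely, if the compatibility condition holds, then $\mathcal{X}^{\star} \define \mathcal{A}^{\dagger} \star_P \mathcal{D}$ is immediately a solution since $\mathcal{A} \star_M \mathcal{X}^{\star} = \mathcal{A} \star_M \mathcal{A}^{\dagger} \star_P \mathcal{D} = \mathcal{D}$ by hypothesis.

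For the general parameterization I would prove two inclusions. In one direction, any $\mathcal{X}$ of the form $\mathcal{A}^{\dagger} \star_P \mathcal{D} + (\mathcal{I} - \mathcal{A}^{\dagger} \star_P \mathcal{A}) \star_M \mathcal{U}$ satisfies the equation because $\mathcal{A} \star_M (\mathcal{I} - \mathcal{A}^{\dagger} \star_P \mathcal{A}) = \mathcal{A} - \mathcal{A} \star_M \mathcal{A}^{\dagger} \star_P \mathcal{A} = \mathcal{O}$, again by property (1). Conversely, given any solution $\mathcal{X}$, I would choose $\mathcal{U} = \mathcal{X}$ and verify that the right-hand side reproduces $\mathcal{X}$; this reduces to checking $\mathcal{A}^{\dagger} \star_P \mathcal{D} = \mathcal{A}^{\dagger} \star_P \mathcal{A} \star_M \mathcal{X}$, which follows by pre-multiplying the original equation by $\mathcal{A}^{\dagger} \star_P$. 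Uniqueness is not claimed, and indeed different choices of $\mathcal{U}$ can yield the same $\mathcal{X}$ whenever $\mathcal{U}$ lies in the range of $\mathcal{A}^{\dagger} \star_P \mathcal{A}$, which is harmless for the statement.

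The reduction to the invertible case is then automatic: by Lemma~\ref{lemma:condition for the existence of a tensor inverse} and uniqueness of the Moore-Penrose inverse, $\mathcal{A}^{\dagger} = \mathcal{A}^{-1}$, so $\mathcal{I} - \mathcal{A}^{-1} \star_P \mathcal{A} = \mathcal{O}$ and the free term vanishes, leaving $\mathcal{X} = \mathcal{A}^{-1} \star_P \mathcal{D}$. The main potential obstacle is not in any deep structural argument---each step is a one-line manipulation using only the four defining identities---but rather in carefully tracking the Einstein-product contraction orders ($\star_M$ versus $\star_P$) so that every composite is well-typed on the prescribed index blocks $K_1 \times \cdots \times K_P$ and $I_1 \times \cdots \times I_M$; a single mismatched contraction index will make an expression like $\mathcal{A} \star_M \mathcal{A}^{\dagger} \star_P \mathcal{A}$ ill-defined even though its matrix analogue is self-evident.
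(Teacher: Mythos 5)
Your proposal is correct and is the standard Moore--Penrose argument: necessity and sufficiency of the consistency condition via property (1) of Definition~\ref{def: Moore-Penrose Inverse}, the two-inclusion characterization of the solution set (with the clever but standard choice $\mathcal{U}=\mathcal{X}$ for the converse inclusion), and collapse of the free term when $\mathcal{A}^{\dagger}=\mathcal{A}^{-1}$. The paper itself supplies no proof for this theorem --- it defers entirely to the cited reference --- and that reference's argument is essentially the one you give, so there is nothing substantive to contrast; your only real care point, as you note, is that invertibility forces $\mathcal{A}$ to be square (so $P=M$ and $K_i=I_i$), which makes the contraction orders in the final reduction consistent.
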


We are ready to present our theorem about sensitvity analysis for solution of a multilinear system.

\begin{theorem}\label{Theorem: sensitivity of AX equal D}
The original multilinear system of equations is 
\begin{eqnarray}\label{eq: AX equal D original}
\mathcal{A}\star_M \mathcal{X} = \mathcal{D} 
\end{eqnarray}
where $\mathcal{A} \in \mathbb{C}^{K_1 \times \cdots \times K_P \times I_1 \times \cdots \times I_M}$, $\mathcal{D} \in \mathbb{C}^{K_1 \times \cdots \times K_P \times L_1 \times \cdots \times L_Q}$, 
and \\
$\mathcal{O} \neq \mathcal{D} \in \mathbb{C}^{K_1 \times \cdots \times K_P \times L_1 \times \cdots \times L_Q}$. The perturbed system can be expressed as 
\begin{eqnarray}\label{eq: AX equal D with pert}
(\mathcal{A} + \delta \mathcal{A}) \star_M \mathcal{Y} = (\mathcal{D} + \delta \mathcal{D}), 
\end{eqnarray}
where $\delta \mathcal{A} \in \mathbb{C}^{K_1 \times \cdots \times K_P \times I_1 \times \cdots \times I_M}$
and $\delta \mathcal{D} \in \mathbb{C}^{K_1 \times \cdots \times K_P \times L_1 \times \cdots \times L_Q}$. If 
the tensor $\delta \mathcal{A}$ is decomposed as (for example, by SVD decomposition when $\delta \mathcal{A}$ is a square tensor, see~\cite{sun2016moore}) 
\begin{eqnarray}\label{delta A decomp}
\delta \mathcal{A} &=& \mathcal{U} \mathcal{B} \mathcal{V} \nonumber \\
&=&  ( \mathcal{X}_1 + \mathcal{Y}_1 ) \mathcal{B} ( \mathcal{X}_2 + \mathcal{Y}_2)^H,
\end{eqnarray}
where $\mathcal{X}_1 \in \mathfrak{C}(\mathcal{A})$, $\mathcal{Y}_1$ is orthgonal to $\mathfrak{C}(\mathcal{A})$, $\mathcal{X}_2 \in \mathfrak{C}(\mathcal{A}^{H})$ and $\mathcal{Y}_2$ is orthgonal to $\mathfrak{C}(\mathcal{A}^{H})$.

We further assume that $\left\Vert \mathcal{X}_i \right\Vert \leq \epsilon_{A} \left\Vert \mathcal{A} \right\Vert$ for $1 \leq i \leq 2$, $\left\Vert \mathcal{E}_i \right\Vert \leq \epsilon_{A} \left\Vert \mathcal{A} \right\Vert$ for $1 \leq i \leq 2$ (Recall $\mathcal{E}_i \define \mathcal{Y}_i ( \mathcal{Y}^{H}_i \mathcal{Y}_i )^{\dagger}$) and   
$\left\Vert \delta \mathcal{D} \right\Vert \leq \epsilon_{D} \left\Vert \mathcal{B} \right\Vert$, then
\begin{eqnarray}
\frac{\left\Vert \mathcal{Y} - \mathcal{X} \right\Vert}{\left\Vert \mathcal{X} \right\Vert} &\leq& (1 + \epsilon_D) \left\Vert \mathcal{A} \right\Vert^3 (2 \epsilon^2_{A} \left\Vert \mathcal{A}^{\dagger} \right\Vert + \epsilon^3_{A} \left\Vert \mathcal{A} \right\Vert + \epsilon^4_{A} \left\Vert \mathcal{A} \right\Vert^2 \left\Vert \mathcal{A}^{\dagger} \right\Vert) + \nonumber \\
& & \epsilon_D  \left\Vert \mathcal{A} \right\Vert \left\Vert \mathcal{A}^{\dagger} \right\Vert.
\end{eqnarray}
\end{theorem}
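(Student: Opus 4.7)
The plan is to turn this into a bookkeeping exercise driven by the Sherman--Morrison--Woodbury identity for the Moore--Penrose inverse (Theorem~\ref{thm: Identity for Tensors with Moore-Penrose Inverse}) together with the two Frobenius-norm inequalities (Lemmas~\ref{Submultiplicative of Frobenius Norm} and~\ref{Triangle Inequality of Frobenius Norm}). First I would invoke Theorem~\ref{Theorem: AX equal D Moore} to express the two solutions explicitly: $\mathcal{X}=\mathcal{A}^{\dagger}\star_P\mathcal{D}$ for the unperturbed system (the free-parameter term is discarded under the standing solvability hypothesis $\mathcal{A}\star_M\mathcal{A}^{\dagger}\star_P\mathcal{D}=\mathcal{D}$), and $\mathcal{Y}=(\mathcal{A}+\delta\mathcal{A})^{\dagger}\star_P(\mathcal{D}+\delta\mathcal{D})$ for the perturbed system. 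The decomposition $\delta\mathcal{A}=(\mathcal{X}_1+\mathcal{Y}_1)\star_K\mathcal{B}\star_K(\mathcal{X}_2+\mathcal{Y}_2)^H$ is exactly the form required by Theorem~\ref{thm: Identity for Tensors with Moore-Penrose Inverse}, so applying that identity rewrites $(\mathcal{A}+\delta\mathcal{A})^{\dagger}$ as $\mathcal{A}^{\dagger}$ plus four correction tensors.

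Second, I would form $\mathcal{Y}-\mathcal{X}$ by substitution. The difference naturally splits into five summands: the ``data-only'' term $\mathcal{A}^{\dagger}\star_P\delta\mathcal{D}$, and four ``SMW-correction'' terms of the shape (correction)$\star_P(\mathcal{D}+\delta\mathcal{D})$, namely those coming from $-\mathcal{E}_2\mathcal{X}_2^H\mathcal{A}^{\dagger}$, $-\mathcal{A}^{\dagger}\mathcal{X}_1\mathcal{E}_1^H$, $\mathcal{E}_2\mathcal{B}^{\dagger}\mathcal{E}_1^H$, and $\mathcal{E}_2\mathcal{X}_2^H\mathcal{A}^{\dagger}\mathcal{X}_1\mathcal{E}_1^H$. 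I would then repeatedly apply submultiplicativity (Lemma~\ref{Submultiplicative of Frobenius Norm}) to split each summand into a product of factor norms, bound each occurrence of $\mathcal{X}_i$ or $\mathcal{E}_i$ by $\epsilon_A\|\mathcal{A}\|$, bound $\|\delta\mathcal{D}\|$ by $\epsilon_D\|\mathcal{D}\|$ (interpreting the hypothesis consistently so the $\epsilon_D\|\mathcal{A}\|\|\mathcal{A}^{\dagger}\|$ term in the final bound materializes), and finish with Lemma~\ref{Triangle Inequality of Frobenius Norm} to aggregate. The normalization step uses $\|\mathcal{D}\|=\|\mathcal{A}\star_M\mathcal{X}\|\le\|\mathcal{A}\|\,\|\mathcal{X}\|$, which converts every factor $\|\mathcal{D}+\delta\mathcal{D}\|/\|\mathcal{X}\|$ into $(1+\epsilon_D)\|\mathcal{A}\|$.

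The bookkeeping then proceeds routinely: the two ``symmetric'' corrections each contribute $(1+\epsilon_D)\epsilon_A^2\|\mathcal{A}\|^3\|\mathcal{A}^{\dagger}\|$ (yielding the factor $2$); the triple-product correction contributes $(1+\epsilon_D)\epsilon_A^4\|\mathcal{A}\|^5\|\mathcal{A}^{\dagger}\|$; and the data-only term yields $\epsilon_D\|\mathcal{A}\|\|\mathcal{A}^{\dagger}\|$. The main obstacle, and where a careful reading of the hypotheses is required, is the term involving $\mathcal{E}_2\mathcal{B}^{\dagger}\mathcal{E}_1^H$: a direct submultiplicative bound on this summand produces a factor $\|\mathcal{B}^{\dagger}\|$ that does not appear on the right-hand side of the target inequality. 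To match the stated $\epsilon_A^3\|\mathcal{A}\|^4$ contribution one must either read $\|\mathcal{B}^{\dagger}\|$ as implicitly controlled by $\epsilon_A\|\mathcal{A}\|$ (consistent with the rest of the size scalings of the perturbation components) or absorb it through the compatibility relations (3)--(4) of Theorem~\ref{thm: Identity for Tensors with Moore-Penrose Inverse}. Everything else is triangle-inequality aggregation followed by division by $\|\mathcal{X}\|$.
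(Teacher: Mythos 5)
Your proposal follows essentially the same route as the paper's own proof: express both solutions via Theorem~\ref{Theorem: AX equal D Moore} (dropping the free term $(\mathcal{I}-\mathcal{A}^{\dagger}\star_P\mathcal{A})\star_M\mathcal{U}$ by choosing $\mathcal{U}=\mathcal{O}$), expand $(\mathcal{A}+\delta\mathcal{A})^{\dagger}$ with the Moore--Penrose Sherman--Morrison--Woodbury identity of Theorem~\ref{thm: Identity for Tensors with Moore-Penrose Inverse}, bound each summand of $\mathcal{Y}-\mathcal{X}$ using Lemmas~\ref{Submultiplicative of Frobenius Norm} and~\ref{Triangle Inequality of Frobenius Norm}, and normalize via $\left\Vert\mathcal{D}\right\Vert\leq\left\Vert\mathcal{A}\right\Vert\left\Vert\mathcal{X}\right\Vert$. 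You also correctly flag the two blemishes the paper passes over silently: the stated hypothesis $\left\Vert\delta\mathcal{D}\right\Vert\leq\epsilon_D\left\Vert\mathcal{B}\right\Vert$ is actually used as $\left\Vert\delta\mathcal{D}\right\Vert\leq\epsilon_D\left\Vert\mathcal{D}\right\Vert$, and the $\mathcal{E}_2\mathcal{B}^{\dagger}\mathcal{E}_1^{H}$ summand only yields the claimed $\epsilon_A^3$ contribution if $\left\Vert\mathcal{B}^{\dagger}\right\Vert\leq\epsilon_A\left\Vert\mathcal{A}\right\Vert$ is additionally assumed.
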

\textbf{Proof:} 

From Theorem~\ref{Theorem: AX equal D Moore} and the Eq.~\ref{eq: Theorem AX equal D Standard sol Moore}, the solution for the Eq.~\eqref{eq: AX equal D original} is 
\begin{eqnarray}\label{eq: pert proof 1}
\mathcal{X} = \mathcal{A}^{\dagger} \star_P \mathcal{D} + (\mathcal{I} - \mathcal{A}^{\dagger} \star_P \mathcal{A})\star_M \mathcal{U},
\end{eqnarray}
and, similarly, the solution for the Eq.~\eqref{eq: AX equal D with pert} is 
\begin{eqnarray}\label{eq: pert proof 2}
\mathcal{Y} = (\mathcal{A} + \delta \mathcal{A})^{\dagger} \star_P (\mathcal{D} + \delta \mathcal{D})+ (\mathcal{I} -  (\mathcal{A} + \delta \mathcal{A})^{\dagger} \star_P  (\mathcal{A} + \delta \mathcal{A})\star_M \mathcal{U}.
\end{eqnarray}
Since the tensor $\mathcal{U}$ can be chosen arbitraryly, we can set $\mathcal{U}$ as zero tensor and we have
\begin{eqnarray}\label{eq: pert proof 3}
\mathcal{Y} - \mathcal{X} &=& (\mathcal{A} + \delta \mathcal{A} )^{\dagger}  (\mathcal{D} + \delta \mathcal{D}) - \mathcal{A}^{\dagger} \mathcal{D} \nonumber \\
&=& \left[ \mathcal{A} + ( \mathcal{X}_1 + \mathcal{Y}_1 ) \mathcal{B} ( \mathcal{X}_2 + \mathcal{Y}_2)^H \right]^{\dagger}  (\mathcal{D} + \delta \mathcal{D}) - \mathcal{A}^{\dagger} \mathcal{D} \nonumber \\
&\stackrel{1}{=}&( \mathcal{A}^{\dagger} - \mathcal{E}_2 \mathcal{X}^{H}_2\mathcal{A}^{\dagger} -  \mathcal{A}^{\dagger}\mathcal{X}_1 \mathcal{E}^{H}_1 + \mathcal{E}_2 (\mathcal{B}^{\dagger} + \mathcal{X}^{H}_2 \mathcal{A}^{\dagger} \mathcal{X}_1 ) \mathcal{E}^{H}_1) (\mathcal{D} + \delta \mathcal{D}) - \mathcal{A}^{\dagger} \mathcal{D} \nonumber \\
&=& \mathcal{E}_2 \mathcal{X}^{H}_2\mathcal{A}^{\dagger}\mathcal{D} -  \mathcal{A}^{\dagger}\mathcal{X}_1 \mathcal{E}^{H}_1\mathcal{D} + \mathcal{E}_2 (\mathcal{B}^{\dagger} + \mathcal{X}^{H}_2 \mathcal{A}^{\dagger} \mathcal{X}_1 ) \mathcal{E}^{H}_1 \mathcal{D} + \mathcal{A}^{\dagger} \delta \mathcal{D} - \nonumber \\
& & \mathcal{E}_2 \mathcal{X}^{H}_2\mathcal{A}^{\dagger} \delta \mathcal{D} -  \mathcal{A}^{\dagger}\mathcal{X}_1 \mathcal{E}^{H}_1 \delta \mathcal{D}+ \mathcal{E}_2 (\mathcal{B}^{\dagger} + \mathcal{X}^{H}_2 \mathcal{A}^{\dagger} \mathcal{X}_1 ) \mathcal{E}^{H}_1 \delta \mathcal{D},
\end{eqnarray}
where we apply Theorem~\ref{thm: Identity for Tensors with Moore-Penrose Inverse} at $\stackrel{1}{=}$. If we take Frobenius norm at both sides of Eq.~\eqref{eq: pert proof 3}, we get 
\begin{eqnarray}\label{eq: pert proof 4}
\left\Vert \mathcal{Y} - \mathcal{X} \right\Vert &=& \left\Vert \mathcal{E}_2 \mathcal{X}^{H}_2\mathcal{A}^{\dagger}\mathcal{D} -  \mathcal{A}^{\dagger}\mathcal{X}_1 \mathcal{E}^{H}_1\mathcal{D} + \mathcal{E}_2 (\mathcal{B}^{\dagger} + \mathcal{X}^{H}_2 \mathcal{A}^{\dagger} \mathcal{X}_1 ) \mathcal{E}^{H}_1 \mathcal{D} + \mathcal{A}^{\dagger} \delta \mathcal{D} \right. - \nonumber \\
& & \left. \mathcal{E}_2 \mathcal{X}^{H}_2\mathcal{A}^{\dagger} \delta \mathcal{D} -  \mathcal{A}^{\dagger}\mathcal{X}_1 \mathcal{E}^{H}_1 \delta \mathcal{D}+ \mathcal{E}_2 (\mathcal{B}^{\dagger} + \mathcal{X}^{H}_2 \mathcal{A}^{\dagger} \mathcal{X}_1 ) \mathcal{E}^{H}_1 \delta \mathcal{D} \right\Vert \nonumber \\
&\stackrel{2}{\leq}& 
\left\Vert \mathcal{E}_2   \right\Vert \left\Vert \mathcal{X}^{H}_2  \right\Vert \left\Vert \mathcal{A}^{\dagger}  \right\Vert \left\Vert\mathcal{D}  \right\Vert + \left\Vert  \mathcal{A}^{\dagger}  \right\Vert \left\Vert \mathcal{X}_1 \right\Vert  \left\Vert \mathcal{E}^{H}_1  \right\Vert \left\Vert \mathcal{D} \right\Vert  + \left\Vert \mathcal{E}_2  \right\Vert \left\Vert \mathcal{B}^{\dagger} \right\Vert  \left\Vert  \mathcal{E}^{H}_1 \right\Vert  \left\Vert   \mathcal{D}  \right\Vert  + \nonumber \\
& & \left\Vert \mathcal{X}^{H}_2  \right\Vert \left\Vert \mathcal{A}^{\dagger} \right\Vert  \left\Vert \mathcal{X}_1 \right\Vert  \left\Vert \mathcal{E}^{H}_1  \right\Vert \left\Vert \mathcal{D}  \right\Vert + 
\left\Vert \mathcal{A}^{\dagger}  \right\Vert\left\Vert \delta \mathcal{D}  \right\Vert+ \left\Vert \mathcal{E}_2 \right\Vert \left\Vert \mathcal{X}^{H}_2 \right\Vert \left\Vert \mathcal{A}^{\dagger} \right\Vert \left\Vert \delta \mathcal{D} \right\Vert +
\nonumber \\
& &  \left\Vert \mathcal{A}^{\dagger} \right\Vert \left\Vert \mathcal{X}_1 \right\Vert \left\Vert \mathcal{E}^{H}_1 \right\Vert \left\Vert \delta \mathcal{D} \right\Vert+ \left\Vert \mathcal{E}_2 \right\Vert \left\Vert \mathcal{B}^{\dagger} \right\Vert \left\Vert \mathcal{E}^{H}_1 \right\Vert \left\Vert  \delta \mathcal{D} \right\Vert + \nonumber \\
&  &\left\Vert \mathcal{X}^{H}_2 \right\Vert \left\Vert \mathcal{A}^{\dagger} \right\Vert \left\Vert  \mathcal{X}_1 \right\Vert \left\Vert \mathcal{E}^{H}_1 \right\Vert \left\Vert \delta \mathcal{D} \right\Vert \end{eqnarray}
where we apply Lemma~\ref{Submultiplicative of Frobenius Norm} and Lemma~\ref{Triangle Inequality of Frobenius Norm} to the
inequality $\stackrel{2}{\leq}$. Because we have that $\left\Vert \mathcal{X}_i \right\Vert \leq \epsilon_{A} \left\Vert \mathcal{A} \right\Vert$ for $1 \leq i \leq 2$, $\left\Vert \mathcal{E}_i \right\Vert \leq \epsilon_{A} \left\Vert \mathcal{A} \right\Vert$ for $1 \leq i \leq 2$ (Recall $\mathcal{E}_i \define \mathcal{Y}_i ( \mathcal{Y}^{H}_i \mathcal{Y}_i )^{\dagger}$) and   
$\left\Vert \delta \mathcal{D} \right\Vert \leq \epsilon_{D} \left\Vert \mathcal{D} \right\Vert$, then the Eq.~\eqref{eq: pert proof 4} can be further reduced as:
\begin{eqnarray}
\left\Vert \mathcal{Y} - \mathcal{X} \right\Vert &=&  (1 + \epsilon_D) \left\Vert \mathcal{D} \right\Vert (2 \epsilon^2_A \left\Vert \mathcal{A} \right\Vert^2  \left\Vert \mathcal{A}^{\dagger} \right\Vert  + \epsilon_A^{3} \left\Vert \mathcal{A} \right\Vert^3 +
\epsilon_A^{4} \left\Vert \mathcal{A} \right\Vert^4 \left\Vert \mathcal{A}^{\dagger} \right\Vert ) + \nonumber \\
&  & \epsilon_D \left\Vert \mathcal{A}^{\dagger} \right\Vert \left\Vert \mathcal{D} \right\Vert, 
\end{eqnarray}
and since $\left\Vert \mathcal{D} \right\Vert = \left\Vert \mathcal{A} \mathcal{X} \right\Vert \leq \left\Vert \mathcal{A} \right\Vert \left\Vert \mathcal{X} \right\Vert$, we have 
\begin{eqnarray}
\frac{\left\Vert \mathcal{Y} - \mathcal{X} \right\Vert}{\left\Vert \mathcal{X} \right\Vert} &\leq& (1 + \epsilon_D) \left\Vert \mathcal{A} \right\Vert^3 (2 \epsilon^2_{A} \left\Vert \mathcal{A}^{\dagger} \right\Vert + \epsilon^3_{A} \left\Vert \mathcal{A} \right\Vert + \epsilon^4_{A} \left\Vert \mathcal{A} \right\Vert^2 \left\Vert \mathcal{A}^{\dagger} \right\Vert) +  \nonumber \\
&  &\epsilon_D \left\Vert \mathcal{A} \right\Vert \left\Vert \mathcal{A}^{\dagger} \right\Vert 
\end{eqnarray}

The theorem is proved. 

$\hfill \Box$

\subsection{Numerical Evaluation}\label{sec:Numerical Evaluation}


In this section, we will apply normalized error bound results derived in Section~\ref{sec:Sensitivity Analysis} to the multilinear equation $\mathcal{A}\mathcal{X} = \mathcal{D}$ with following tensors:
\begin{eqnarray}\label{eq:num tensor A}
\mathcal{A}&=& \left[
    \begin{array}{cc : cc}
       1 & -1 & 0 & 0  \\
       0 & 0 & -1 & 0 \\ \hdashline[2pt/2pt]
       0 &  1 & 0 & 0  \\
       0 &  0 & 1 & 0 
    \end{array}
\right],
\end{eqnarray}
\begin{eqnarray}\label{eq:num tensor A dagger}
\mathcal{A}^{\dagger}&=& \left[
    \begin{array}{cc : cc}
       1 & 0 & 0 & 0  \\
       1 & 0 & 1 & 0 \\ \hdashline[2pt/2pt]
       0 &  -0.5 & 0 & 0  \\
       0 &  0.5 & 0 & 0 
    \end{array}
\right],
\end{eqnarray}
and
\begin{eqnarray}\label{eq:num tensor D}
\mathcal{D}&=& \left[
    \begin{array}{cc }
       1 & 2 \\
       1 & 1 
    \end{array}
\right].
\end{eqnarray}

In Fig.~\ref{AX_D_Moore_Sensi_epsA_09_05_01}, the normalized error bound for the multilinear system $\mathcal{A}\mathcal{X} = \mathcal{D}$ is presented against with the change of the Frobenius norm of the tesnor $\mathcal{A}$ according to the Theorem~\ref{Theorem: sensitivity of AX equal D}. Fig.~\ref{AX_D_Moore_Sensi_epsA_09_05_01} delineates the normalized error bound with respect to three different perturbation values $\epsilon_A=0.09, 0.05, 0.01$ of the tensor $\mathcal{A}$ subject to the perturbation value $\epsilon_D = 0.01$ of the tensor $\mathcal{D}$. The way we change the Frobenius norm of the tesnor $\mathcal{A}$ is by scaling the tensor $\mathcal{A}$ with some positive number $\alpha$, i.e., $\alpha \mathcal{A}$ is a tensor obtained by multiplying the value $\alpha$ to each entries of the tensor $\mathcal{A}$. We observe that the normalization error $E_n$ increases with the increase of the perturbation value $\epsilon_A$. Given the same perturbaiton value $\epsilon_A$,  the normalized error bound $E_n$ can achieve its minimum by scaling the tesnor $\mathcal{A}$ properly. For example, when the value $\epsilon_A$ is $0.09$, the minimum error bound happens when the value of $\left\Vert \mathcal{A} \right\Vert$ is about $2.5$.

\begin{figure}[htbp]
	\centerline{\includegraphics[width=\columnwidth,draft=false]
		{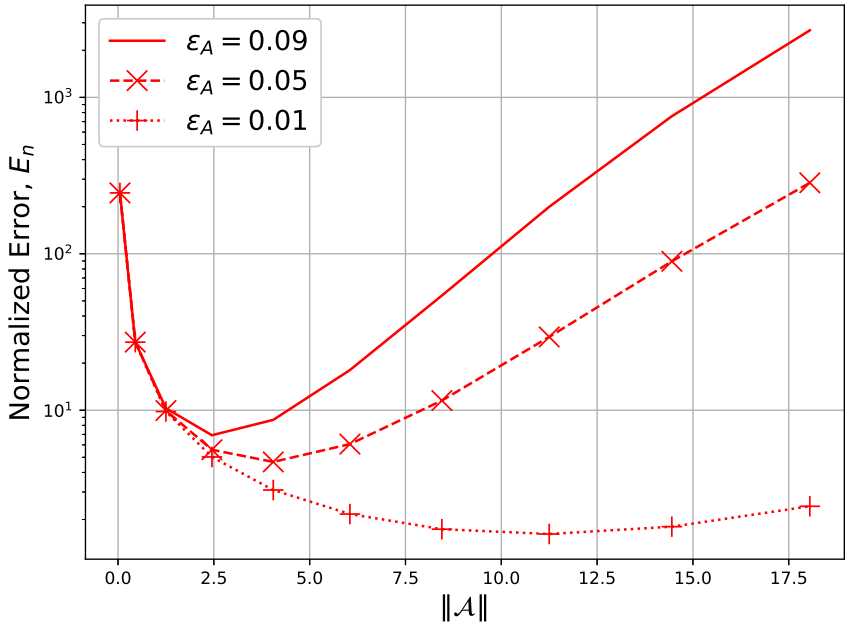}}
	\caption{The normalized error bound $E_n$ for the perturbed multilinear system $\mathcal{A}\mathcal{X} = \mathcal{D}$ with respect to the tesnor norm $  \left\Vert \mathcal{A} \right\Vert  $ for 
different $\epsilon_A$ values when the tesnor norm $\left\Vert \mathcal{D} \right\Vert$ is $7$ and $\epsilon_D = 0.01$. }\label{AX_D_Moore_Sensi_epsA_09_05_01}
\end{figure}

In Fig.~\ref{AX_D_Moore_Sensi_epsD_09_05_01}, the normalized error bound for the multilinear system $\mathcal{A}\mathcal{X} = \mathcal{D}$ is presented against with the change of the Frobenius norm of the tesnor $\mathcal{A}$. Fig.~\ref{AX_D_Moore_Sensi_epsD_09_05_01} plots the normalized error bound with respect to three different perturbation values $\epsilon_D=0.09, 0.05, 0.01$ of the tensor $\mathcal{D}$ subject to the perturbation value $\epsilon_A = 0.01$ of the tensor $\mathcal{A}$. We find that the normalization error $E_n$ increases with the increase of the perturbation value $\epsilon_D$. Given the same perturbaiton value $\epsilon_A$,  the bound $E_n$ also can achieve its minimum by scaling the tesnor $\mathcal{A}$ properly. For example, when the value $\epsilon_D$ is $0.01$, the minimum error bound happens when the value of $\left\Vert \mathcal{A} \right\Vert$ is about $1.25$. Compared to Fig.~\ref{AX_D_Moore_Sensi_epsD_09_05_01}, the error bounds difference between various perturbation values $\epsilon_A$ becomes more significant when the value of the Frobenius norm of the tensor $\mathcal{A}$ increases. On the other hand, the error bounds difference between various perturbation values $\epsilon_D$ becomes less significant when the value of the Frobenius norm of the tensor $\mathcal{A}$ increases. Both figures show that the error bound variation is more sensitive with respect to the Frobenius norm of the tensor $\mathcal{A}$ for smaller value range of $\left\Vert \mathcal{A} \right\Vert$.

\begin{figure}[htbp]
	\centerline{\includegraphics[width=\columnwidth,draft=false]
		{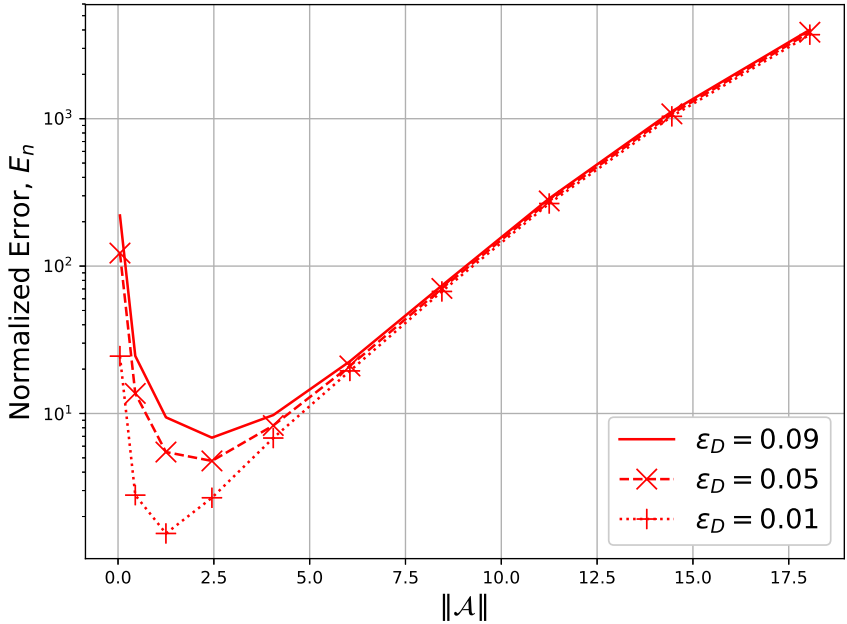}}
	\caption{The normalized error $E_n$ for the perturbed multilinear system equations $\mathcal{A}\mathcal{X} = \mathcal{D}$ with respect to the tesnor norm $  \left\Vert \mathcal{A} \right\Vert  $ for 
different $\epsilon_D$ when the tesnor norm $  \left\Vert \mathcal{D} \right\Vert  $ is $7$.}\label{AX_D_Moore_Sensi_epsD_09_05_01}
\end{figure}

\section{Conclusions}\label{sec:Conclusions}

Motivated by great applications of the Sherman–Morrison–\\
Woodbury matrix identity, analogously, we developed the Sherman–Morrison–\\
Woodbury identity for tensors to facilitate the tensor inversion computation with those benefits in the matrix inversion computation when the correction of the original tensors is required. We first established the Sherman–Morrison–Woodbury identity for invertible tensors. Furthermore, we generalized the Sherman–Morrison–Woodbury identity for tensor with Moore-Penrose inverse by using orthogonal projection of the correction tensor part into the original tensor and its Hermitian tensor. Finally, we applied the Sherman–Morrison–Woodbury identity to characterize the error bound for the solution of a multilinear system between the original system and the corrected system, i.e., the coefficient tensors are corrected by other tensors with same dimensions.

There are several possible future works that can be extended based on current work. Because we can quantify the normalized error bound with respect to perturbation values and the Frobenius norm of the coefficient tensor, the next question is how to design a robust multilinear system to have the minimum normalized solution error given perturbation values. Such robust design should be crucial in many engineering problems which are modeled by multilinear systems. We have to decompose the perturbed tensor in the Eq.~\eqref{delta A decomp} in order to apply our result, similar to the matrix case, how can we select low rank decomposition for the perturbed tensor is the second direction for the future research. Since we have developed a new Sherman–Morrison–Woodbury identity for tensor, it will be interested in finding more impactful applications based on this new identity. We expect this new identity will shed light on the development of more efficient tensor-based calculations in the near future.

\section*{Acknowledgments}
The helpful comments of the referees are gratefully acknowledged.

\bibliographystyle{siamplain}
\bibliography{TensorSheMorWoodbury_Bib}
\end{document}